\newcounter{minutes}\setcounter{minutes}{\time}
\newcounter{hours}\setcounter{hours}{\time}
\newtheorem{theorem}{Theorem}
\newtheorem{corollary}{Corollary}
\newtheorem{remark}{Remark}
\title[Generalized Volterra functions]{Generalized Volterra functions, its integral representations and applications to the Mathieu--type series}
\author[K. Mehrez, S. M. Sitnik]{Khaled Mehrez\;and Sergei M. Sitnik}
\address{Khaled Mehrez\newline
D\'epartement de Math\'ematiques, Universit\'e de Kairouan, Tunisia \textit{and}\newline
D\'epartement de Math\'ematiques, Facult\'ee des sciences de Tunis, Universit\'e Tunis El Manar, Tunisia.}
\email{k.mehrez@yahoo.fr}
\address{Sergei M. Sitnik\newline
Belgorod State National Research University (BSU), Belgorod, Russia.}
\email{Sitnik@bsu.edu.ru}
\keywords{Generalized Volterra functions,  complete monotonicity, log--convex functions, Tur\'an type inequalities, Mathieu--type series.}
\subjclass[2010]{11M35, 33D05, 33B15, 26A51.}
\begin{document}

\def\thefootnote{}
\footnotetext{ \texttt{File:~\jobname .tex,
          printed: \number\year-0\number\month-\number\day,
          \thehours.\ifnum\theminutes<10{0}\fi\theminutes}
} \makeatletter\def\thefootnote{\@arabic\c@footnote}\makeatother

\maketitle

\begin{abstract}
In this paper we introduce the new class of generalized Volterra  functions. We prove some integral representations for them via Fox--Wright H--functions and Meijer G--functions. From  positivity conditions on the weight in these representations, we found sufficient conditions  on  parameters of the generalized Volterra  function to prove its complete monotonicity. As applications we prove a Tur\'an type inequality for generalized Volterra  functions and derive closed--form integral representations  for a family of convergent Mathieu--type series defined in terms of  generalized Volterra  functions.
\end{abstract}

\section{Introduction}
Consider definitions of classical Volterra and  related  functions  as stated in (\cite[p. 217]{ER}):
\begin{eqnarray}
\nu(x)&=&\int_0^\infty\frac{x^t}{\Gamma(t+1)}dt,\\
\nu(x,\alpha)&=&\int_0^\infty\frac{x^{t+\alpha}}{\Gamma(t+\alpha+1)}dt,\\
\mu(x,\beta)&=&\int_0^\infty\frac{x^t t^\beta}{\Gamma(t+1)\Gamma(\beta+1)}dt\\
\mu_(x,\beta,\alpha)&=&\int_0^\infty\frac{x^{t+\alpha} t^\beta}{\Gamma(t+\alpha+1)\Gamma(\beta+1)}dt,
\end{eqnarray}
where $\alpha,\beta>-1$ and $x>0,$ but some particular notations are usually adopted in  special cases
\begin{equation}
\begin{split}
\alpha&=\beta=0,\;\;\nu(x)=\mu(x,0,0)\\
\alpha&\neq0,\beta=0,\;\;\nu(x,\alpha)=\mu(x,0,\alpha)\\
\alpha&=0,\beta\neq0,\;\;\mu(x,\beta)=\mu(x,\beta,0).
\end{split}
\end{equation}

Volterra functions were introduced by Vito Volterra in 1916. Its theory was thoroughly developed by Mhitar M. Dzhrbashyan, his own and his coathors results were summed up in the monograph \cite{Dzh} in 1966. In this book many important results on Volterra functions, known and new, were gathered and introduced.  Many results on Volterra functions were also gathered in two books of A. Apelblat \cite{App1}--\cite{App2}, for important application cf. also \cite{GaMa}.

In this paper we define the new class of  generalized Volterra  functions $ V_{p,q}^{\alpha,\beta}[.]$  with $p$ numerator parameters $\alpha_1,...,\alpha_p$ and $q$ denominator parameters $\beta_1,...,\beta_q,$ by
\begin{equation}\label{DefGenVolt}
V_{p,q}^{\alpha,\beta}\Big[_{(b_1,B_1),...,(b_q,B_q)}^{(a_1,A_1),...,(a_p,A_p)}\Big|x \Big]=V_{p,q}^{\alpha,\beta}\Big[_{(\beta_q,B_q)}^{(a_p,A_p)}\Big|x \Big]=\int_0^\infty \frac{\prod_{i=1}^p\Gamma(A_it+a_i)}{\prod_{j=1}^q\Gamma(B_jt+b_j)}\frac{x^{t+\alpha}\ t^\beta}{\Gamma(\beta+1)} dt,
\end{equation}
where $$\Big(\alpha,\beta>-1,\;x>0,A_i,B_j>0,\;a_i,b_j\geq0,(i=1,...,p;j=1,...,q)\Big).$$

For the special case $x=1/e, \beta=s-1$ the function \eqref{DefGenVolt} up to the constant reduces to the function
\begin{equation}
V1_{p,q}^{\alpha,\beta}\Big[_{(\beta_q,B_q)}^{(a_p,A_p)}\Big|p,s \Big]=\int_0^\infty \frac{\prod_{i=1}^p\Gamma(A_it+a_i)}{\prod_{j=1}^q\Gamma(B_jt+b_j)}{e^{-pt}\ t^{s-1}} dt,
\end{equation}
which is interesting and important as a simultaneous expression for Laplace and Mellin transforms for the gamma--function ratio.

The paper is organized as follows. In section 2 we prove several integral representations for the new class of the  generalized Volterra function via  Fox--Wright functions and the Laplace transform. Various new facts regarding the generalized Volterra function are proved, including complete monotonicity property, log--convexity in upper parameters, and a Tur\'an type inequality. In section 3 closed--form integral expressions are derived  for a family of convergent Mathieu--type series and its alternating variant when terms contain the generalized Volterra function.

\section{Integral representations for the generalized Volterra functions}

To formulate our first main result we  need a particular case of Fox's H--function defined by
\begin{equation}
H_{q,p}^{p,0}\left(z\Big|^{(B_1,b_1),...,(B_q,b_q)}_{(A_1,a_1),...,(A_p,a_p)}\right)=H_{q,p}^{p,0}\left(z\Big|^{(B_q,b_q)}_{(A_p,a_p)}\right)=\frac{1}{2i\pi}\int_{\mathcal{L}}\frac{\prod_{i=1}^p\Gamma (A_i s+a_i)}{\prod_{j=1}^q\Gamma (B_k s+b_j)}z^{-s}ds,
\end{equation}
where $A_i, B_j>0$ and $a_i,b_j$ are real.  The contour $\mathcal{L}$ can be either the left loop $\mathcal{L}_-$ starting at $-\infty+i\alpha$ and ending at $-\infty+i\beta$  for some $\alpha<0<\beta$ such that all poles of the integrand lie inside the loop, or the right loop $\mathcal{L}_+$  starting $\infty+i\alpha$ at and ending $\infty+i\beta$ and leaving all poles on the left, or the vertical line $\mathcal{L}_{ic},\;\Re(z)=c,$ traversed upward and leaving all poles of the integrand on the left. Denote the rightmost pole of the integrand by $\gamma:$
$$\gamma=-\min_{1\leq i\leq p}(a_i/A_i).$$
Let
\begin{equation}\label{rho}
\rho=\left(\prod_{i=1}^p A_i^{A_i}\right)\left(\prod_{j=1}^q B_j^{-B_j}\right),\;\mu=\sum_{j=1}^qb_j-\sum_{i=1}^p\alpha_i+\frac{p-q}{2}.
\end{equation}
Existence conditions of Fox's H-function  under each choice of the contour $\mathcal{L}$ have been thoroughly considered in the book \cite{AA}. Let $z>0$ and under  conditions:
$$\sum_{i=1}^p A_j=\sum_{j=1}^q B_j,\;\;\rho\leq 1,$$
we get that the function $H_{q,p}^{p,0}(z)$ exists by means of \cite[Theorem 1.1]{AA}, if we choose $\mathcal{L}=\mathcal{L}_+$ or $\mathcal{L}=\mathcal{L}_{ic}$ under the additional restriction $\mu> 1.$ Only the second choice of the contour ensures the existence of the Mellin transform of $H_{q,p}^{p,0}(z)$, see \cite[Theorem 2.2]{AA}. In \cite[Theorem 6]{Karp}, the authors extend the condition $\mu>1$ to $\mu>0$ and proved that the function $H_{q,p}^{p,0}(z)$ is of compact support.\\

\begin{theorem}\label{TH1} Let $\alpha,\beta>-1.$ Assume that $\mu>0,\;\;\textrm{and}\;\;\sum_{j=1}^pA_j=\sum_{k=1}^qB_k.$
Then the following integral representation for the generalized Volterra function \eqref{DefGenVolt}
\begin{equation}\label{Int1}
V_{p,q}^{\alpha,\beta}\Big[_{(\beta_q,B_q)}^{(a_p,A_p)}\Big|x \Big]=\int_0^\rho H_{q,p}^{p,0}\left(t\Big|^{(B_q,\beta_q)}_{(A_p,\alpha_p)}\right)\frac{x^\alpha dt}{t\log^{\beta+1}(1/(tx))},
\end{equation}
holds true for all $x\in(0,1).$
\end{theorem}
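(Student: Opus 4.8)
The plan is to recognise the gamma--function ratio occurring in the integrand of \eqref{DefGenVolt} as the Mellin transform of the Fox $H$--function $H_{q,p}^{p,0}$, and then to reverse the order of integration. First I would record the required input about $H_{q,p}^{p,0}$: under the hypotheses $\mu>0$ and $\sum_{i=1}^{p}A_i=\sum_{j=1}^{q}B_j$ (and the standing assumption $\rho\le1$), this function exists and, by \cite[Theorem 6]{Karp}, is supported on the compact interval $[0,\rho]$; moreover Mellin inversion applied to its defining Mellin--Barnes integral along the vertical contour $\mathcal{L}_{ic}$ gives
\begin{equation*}
\int_{0}^{\rho}u^{\,s-1}\,H_{q,p}^{p,0}\!\left(u\Big|^{(B_q,b_q)}_{(A_p,a_p)}\right)du=\frac{\prod_{i=1}^{p}\Gamma(A_is+a_i)}{\prod_{j=1}^{q}\Gamma(B_js+b_j)},\qquad\Re(s)>\gamma .
\end{equation*}
Since $a_i\ge0$ forces $\gamma=-\min_{i}(a_i/A_i)\le0$, this identity holds for every real $s=t>0$, which is exactly the range of the integration variable in \eqref{DefGenVolt}.

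Next I would substitute this expression for the gamma ratio into \eqref{DefGenVolt} and interchange the two integrations to obtain
\begin{equation*}
V_{p,q}^{\alpha,\beta}\Big[_{(b_q,B_q)}^{(a_p,A_p)}\Big|x \Big]=\frac{x^{\alpha}}{\Gamma(\beta+1)}\int_{0}^{\rho}\frac{H_{q,p}^{p,0}(u)}{u}\left(\int_{0}^{\infty}(ux)^{t}\,t^{\beta}\,dt\right)du .
\end{equation*}
For $x\in(0,1)$ and $0<u\le\rho\le1$ one has $ux<1$, hence $\log\!\bigl(1/(ux)\bigr)>0$, and the inner integral collapses to the elementary Gamma integral
\begin{equation*}
\int_{0}^{\infty}(ux)^{t}\,t^{\beta}\,dt=\int_{0}^{\infty}e^{-t\log(1/(ux))}\,t^{\beta}\,dt=\frac{\Gamma(\beta+1)}{\log^{\beta+1}\!\bigl(1/(ux)\bigr)},\qquad\beta>-1 .
\end{equation*}
Inserting this, the two factors $\Gamma(\beta+1)$ cancel, and renaming $u$ as $t$ produces precisely \eqref{Int1}.

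The hard part will be the rigorous justification of the interchange of integrals, which I would settle via the Fubini--Tonelli theorem by checking that the iterated integral of absolute values is finite; this reduces to two local estimates. Near $u=0$ the behaviour of $H_{q,p}^{p,0}(u)$ is dictated by the rightmost pole $s=\gamma$ of its Mellin transform, so that $H_{q,p}^{p,0}(u)=O\!\bigl(u^{-\gamma}|\log u|^{\kappa-1}\bigr)$ with $\kappa$ the order of that pole; combined with $\log^{-(\beta+1)}\!\bigl(1/(ux)\bigr)=O\!\bigl(|\log u|^{-(\beta+1)}\bigr)$ this renders the integrand integrable at $0$ under exactly the restriction that already makes \eqref{DefGenVolt} convergent there. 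Near $u=\rho$ the function $H_{q,p}^{p,0}$ has at worst an algebraic singularity of the form $(\rho-u)^{\mu-1}$ --- this is the place where the hypothesis $\mu>0$ enters, cf. \cite{Karp} --- while the remaining factors stay bounded, so the integrand is integrable at $\rho$ as well. Hence the double integral converges absolutely, Fubini applies, and the computation above is legitimate.
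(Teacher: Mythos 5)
Your proposal is correct and follows essentially the same route as the paper: the key input is the same Mellin-transform identity \eqref{eq1} for $H_{q,p}^{p,0}$ from \cite[Theorem 6]{Karp}, followed by interchanging the two integrations and evaluating the inner integral $\int_0^\infty (ux)^t t^\beta\,dt=\Gamma(\beta+1)/\log^{\beta+1}(1/(ux))$. The only difference is that you supply an explicit Fubini--Tonelli justification (with endpoint estimates at $u=0$ and $u=\rho$, the latter using $\mu>0$), which the paper performs without comment; this is a welcome addition rather than a departure.
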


\begin{proof}By using the Mellin transform  for the Fox's H--function $H_{q,p}^{p,0}(z)$ \cite[Theorem 6]{Karp}:
\begin{equation}\label{eq1}
\frac{\prod_{i=1}^p\Gamma(A_i t+\alpha_i)}{\prod_{k=1}^q \Gamma(B_k t+ \beta_k)}=\int_0^\rho H_{q,p}^{p,0}\left(z\Big|^{(B_q,\beta_q)}_{(A_p,\alpha_p)}\right)z^{t-1}dt,\;\Re(t)>\gamma,
\end{equation}
we obtain
\begin{equation}
\begin{split}
V_{p,q}^{\alpha,\beta}\Big[_{(\beta_q,B_q)}^{(a_p,A_p)}\Big|x \Big]&=\int_0^\infty \frac{\prod_{i=1}^p\Gamma(A_it+a_i)}{\prod_{j=1}^q\Gamma(B_jt+b_j)}\frac{x^{t+\alpha}t^\beta}{\Gamma(\beta+1)} dt\\
&=\int_0^\infty\int_0^\rho H_{q,p}^{p,0}\left(z\Big|^{(B_q,\beta_q)}_{(A_p,\alpha_p)}\right)\frac{ x^{t+\alpha}z^{t-1} t^\beta}{\Gamma(\beta+1)}dtdz\\
&=\int_0^\rho H_{q,p}^{p,0}\left(z\Big|^{(B_q,\beta_q)}_{(A_p,\alpha_p)}\right)\left(\int_0^\infty x^{t}z^{t} t^\beta dt\right)\frac{ x^{\alpha}z^{-1}}{\Gamma(\beta+1)}dz\\
&=\int_0^\rho H_{q,p}^{p,0}\left(z\Big|^{(B_q,\beta_q)}_{(A_p,\alpha_p)}\right)\left(\int_0^\infty  t^\beta e^{-t} dt\right)\frac{ x^{\alpha}z^{-1}}{\Gamma(\beta+1)\log^{\beta+1}\left(1/(xz)\right)}dz\\
&=\int_0^\rho H_{q,p}^{p,0}\left(z\Big|^{(B_q,\beta_q)}_{(A_p,\alpha_p)}\right)\frac{ x^{\alpha}z^{-1}}{\log^{\beta+1}\left(1/(xz)\right)}dz.
\end{split}
\end{equation}
This completes the proof of Theorem \ref{TH1}.
\end{proof}

\begin{remark} The special case for which the  H--function reduces to the Meijer G--function is when $A_1=...=A_p=B_1=...=B_q=A,\;A>0.$ In this case,
\begin{equation}\label{,,,}
H_{q,p}^{m,n}\left(z\Big|^{(A,b_q)}_{(A,a_p)}\right)=\frac{1}{A}G_{p,q}^{m,n}\left(z^{1/A}\Big|^{{\bf b}_\textbf{q}}_{\textbf{a}_\textbf{p}}\right),
\end{equation}
where $\textbf{a}_\textbf{p}=(a_1,...,a_p)$ and $\textbf{b}_\textbf{q}=(b_1,...,b_q).$ So we get
\begin{equation}\label{MPM}
V_{p,p}^{\alpha,\beta}\Big[_{(\beta_p,A)}^{(a_p,A)}\Big|x \Big]=\int_0^1G_{p,p}^{p,0}\left(t^{1/A}\Big|^{{\bf b}_\textbf{p}}_{\textbf{a}_\textbf{p}}\right) \frac{x^\alpha dt}{A t\log^{\beta+1}(1/(tx))},
\end{equation}
for all $x\in(0,1)$ and $\alpha,\beta>-1.$
\end{remark}

Let us  note that the special case of the Meijer G--function $G_{p,p}^{p,0}(\cdot)$ from \eqref{MPM} is very important for applications. Due to it in \cite{Karp2} it was proposed to name the function $G_{p,p}^{p,0}(\cdot)$ as Meijer--N{\o}rlund one, due to important results of N.E. N{\o}rlund for this function.

We denote the ratio of gamma--functions by
$$\psi_{n,m}=\frac{\prod_{i=1}^p\Gamma(\alpha_i+(n+m)A_i)}{\prod_{j=1}^q\Gamma(\beta_j+(n+m)B_j)},\;n,m\in\mathbb{N}_0.$$
In \cite[Corollary 1]{Khaled} it was proved that the function $H_{q,p}^{p,0}(z)$ is non--negative on $(0,\rho)$ if $$(H_1^n): \psi_{n,2}<\psi_{n,1}\;\textrm{and}\;\psi_{n,1}^2<\psi_{n,0}\psi_{n,2},\;\textrm{for\;all}\;n\in\mathbb{N}_0.$$
In addition, it was proved that  the H--function $H_{p,p}^{p,0}\big[t|^{(A,\beta_p)}_{(A,\alpha_p)}\big]$ is non--negative, if
$$(H_2):\;0<\alpha_1\leq...\leq\alpha_p,\;0<\beta_1\leq...\leq\beta_p,\;\;\sum_{j=1}^k \beta_j-\sum_{j=1}^k \alpha_j\geq0,\;\textrm{for}\;k=1,...,p.$$

\begin{corollary}Suppose that   conditions  $(H_2)$ are satisfied. Then the function
$$A\mapsto V_{p,p}^{\alpha,\beta}\Big[_{(\beta_p,A)}^{(a_p,A)}\Big|x \Big],$$
is log--convex on $(0,\infty).$ Furthermore, the following Tur\'an type inequality
\begin{equation}\label{turan}
V_{p,p}^{\alpha,\beta}\Big[_{(\beta_p,A)}^{(a_p,A)}\Big|x \Big]V_{p,p}^{\alpha,\beta}\Big[_{(\beta_p,A+2)}^{(a_p,A+2)}\Big|x \Big]-\left(V_{p,p}^{\alpha,\beta}\Big[_{(\beta_p,A+1)}^{(a_p,A+1)}\Big|x \Big]\right)^2\geq0,
\end{equation}
holds true.
\end{corollary}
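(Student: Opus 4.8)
The plan is to exploit the integral representation \eqref{MPM} and the non-negativity of the Meijer--N\o{}rlund kernel $G_{p,p}^{p,0}$ under hypotheses $(H_2)$, reducing the log-convexity claim to a Cauchy--Schwarz argument. Write $K(t)=G_{p,p}^{p,0}\bigl(t^{1/A}\bigm|^{{\bf b}_{\textbf p}}_{\textbf a_{\textbf p}}\bigr)\geq 0$ for $t\in(0,1)$, so that, by \eqref{MPM},
\begin{equation}
V_{p,p}^{\alpha,\beta}\Big[_{(\beta_p,A)}^{(a_p,A)}\Big|x \Big]=\frac{x^\alpha}{A}\int_0^1 \frac{K(t)}{t\,\log^{\beta+1}(1/(tx))}\,dt .
\end{equation}
The key observation is that the dependence on $A$ enters only through the exponent $1/A$ in the argument of $G$. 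Substituting $u=t^{1/A}$, i.e. $t=u^{A}$, $dt/t=A\,du/u$, turns the representation into
\begin{equation}
V_{p,p}^{\alpha,\beta}\Big[_{(\beta_p,A)}^{(a_p,A)}\Big|x \Big]=x^\alpha\int_0^1 \frac{G_{p,p}^{p,0}\bigl(u\bigm|^{{\bf b}_{\textbf p}}_{\textbf a_{\textbf p}}\bigr)}{u\,\log^{\beta+1}\!\bigl(1/(u^{A}x)\bigr)}\,du ,
\end{equation}
so the only $A$-dependence is through the factor $\log^{-(\beta+1)}(1/(u^{A}x)) = \bigl(-A\log u-\log x\bigr)^{-(\beta+1)}$ in the integrand, which is a positive weight on $(0,1)$ since $\log u<0$ and $\log x<0$ there. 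I would then show this weight is log-convex in $A$ for each fixed $u\in(0,1)$.

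First I would establish the pointwise log-convexity in $A$ of $w_A(u):=\bigl(-A\log u-\log x\bigr)^{-(\beta+1)}$. Setting $c=-\log u>0$ and $d=-\log x>0$, we have $w_A(u)=(cA+d)^{-(\beta+1)}$; since $\beta+1>0$, the map $A\mapsto (cA+d)^{-(\beta+1)}$ is completely monotone, hence log-convex, on $(0,\infty)$ (its logarithm is $-(\beta+1)\log(cA+d)$, whose second derivative $(\beta+1)c^2/(cA+d)^2>0$). Consequently, for any $A_1,A_2>0$ and $A=(A_1+A_2)/2$,
\begin{equation}
w_{A}(u)\le \sqrt{w_{A_1}(u)\,w_{A_2}(u)}\qquad (u\in(0,1)).
\end{equation}
Multiplying by the non-negative factor $G_{p,p}^{p,0}(u\mid\cdots)/u$ (non-negative precisely because $(H_2)$ holds, by the cited result from \cite{Khaled}), integrating over $(0,1)$, and applying the Cauchy--Schwarz inequality
\begin{equation}
\int_0^1 \!\frac{G\,\sqrt{w_{A_1}}\sqrt{w_{A_2}}}{u}\,du\le\Bigl(\int_0^1\!\frac{G\,w_{A_1}}{u}\,du\Bigr)^{1/2}\Bigl(\int_0^1\!\frac{G\,w_{A_2}}{u}\,du\Bigr)^{1/2}
\end{equation}
yields $V(A)\le\sqrt{V(A_1)V(A_2)}$ after reinstating the common prefactor $x^\alpha$; this is midpoint log-convexity, and since $V$ is continuous in $A$ (dominated convergence, using that $G$ has compact support in $(0,\rho)$), it upgrades to full log-convexity on $(0,\infty)$.

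Finally, the Tur\'an inequality \eqref{turan} is the instance of log-convexity at the three equally spaced points $A$, $A+1$, $A+2$: log-convexity gives $V(A+1)^2\le V(A)\,V(A+2)$, which is exactly \eqref{turan} once one notes all three quantities are non-negative (again from $K\ge0$). I expect the only genuine subtlety to be the justification of the interchange of integration (Fubini/Tonelli) and of continuity/limit passages: one must check integrability of $G_{p,p}^{p,0}(u\mid\cdots)\,u^{-1}\,w_A(u)$ near $u=0$ and $u=1$. Near $u=1$ one has $-\log u\sim 1-u\to0$, so $w_A(u)\to d^{-(\beta+1)}$ is bounded, and $G$ is bounded there; near $u=0$, the compact support of the Meijer--N\o{}rlund kernel inside $(0,\rho)$ (with $\rho\le1$, guaranteed under $(H_2)$ since then $\rho=\prod A^{A}\prod A^{-A}=1$, or more carefully one checks $\rho\le 1$) together with the asymptotics of $G$ at the origin keeps the integrand integrable. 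Since all these estimates are uniform for $A$ in compact subsets of $(0,\infty)$, both Tonelli's theorem and the continuity of $V$ follow, and the proof is complete.
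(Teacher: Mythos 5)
Your proposal is correct and follows essentially the same route as the paper: after the substitution $u=t^{1/A}$ the representation \eqref{MPM} becomes exactly the paper's formula \eqref{kol}, the $A$-dependence is isolated in the weight $(\log(1/x)+A\log(1/u))^{-(\beta+1)}$, which is log-convex in $A$, and the non-negativity of the Meijer--N{\o}rlund kernel under $(H_2)$ lets an integral inequality transfer this to $V$; the Tur\'an inequality is then the case of equally spaced points $A,A+1,A+2$. The only cosmetic difference is that you use Cauchy--Schwarz (midpoint log-convexity) plus continuity, whereas the paper applies the Rogers--H\"older--Riesz inequality directly with general weights $\lambda,1-\lambda$, which yields full log-convexity in one step.
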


\begin{proof} Rewriting the integral representation (\ref{MPM}) in the following form
\begin{equation}\label{kol}
V_{p,p}^{\alpha,\beta}\Big[_{(\beta_p,A)}^{(a_p,A)}\Big|x \Big]=\int_0^1G_{p,p}^{p,0}\left(t\Big|^{{\bf b}_\textbf{p}}_{\textbf{a}_\textbf{p}}\right) \frac{x^\alpha dt}{t\left(\log(1/x)+A\log(1/t)\right)^{\beta+1}},
\end{equation}
let us recall the Rogers--H\"older--Riesz inequality \cite[p. 54]{DM}, that is
\begin{equation}
\int_a^b|f(t)g(t)|dt\leq\left[\int_a^b|f(t)|^pdt\right]^{1/p}\left[\int_a^b|g(t)|^pdt\right]^{1/q},
\end{equation}
where $p\geq1,\;\frac{1}{p}+\frac{1}{q}=1,\;f$ and $g$ are real functions defined on $(a,b)$ and $|f|^p,\;|g|^q$ are integrable functions on $(a,b)$. From the Rogers--H\"older--Riesz inequality again and integral representation (\ref{kol})  using the fact that the function $A\mapsto \frac{1}{(a+bA)^{\beta+1}},\;a\geq0,b>0$ is log--convex on $(0,\infty)$ we derive that for $A_1,A_2>0$ and $\lambda\in[0,1],$
\newpage
$$V_{p,p}^{\alpha,\beta}\Big[_{(\beta_p,\lambda A_1+(1-\lambda)A_2)}^{(a_p,\lambda A_1+(1-\lambda)A_2)}\Big|x \Big]=$$
\begin{equation*}
\begin{split}
\;\;\;\;\;\;\;\;\;\;\;\;\;\;\;\;\;\;\;\;\;\;\;\;\;\;\;\;\;\;&=\int_0^1G_{p,p}^{p,0}\left(t\Big|^{{\bf b}_\textbf{p}}_{\textbf{a}_\textbf{p}}\right) \frac{x^\alpha dt}{t\left(\log(1/x)+\log(1/t)(A_1+(1-\lambda)A_2)\right)^{\beta+1}}\\
&\leq\int_0^1G_{p,p}^{p,0}\left(t\Big|^{{\bf b}_\textbf{p}}_{\textbf{a}_\textbf{p}}\right) \frac{x^\alpha dt}{t\left(\log(1/x)+A_1\log(1/t)\right)^{\lambda(\beta+1)}
\left(\log(1/x)+A_2\log(1/t)\right)^{(1-\lambda)(\beta+1)}}\\
&\leq\int_0^1\left[G_{p,p}^{p,0}\left(t\Big|^{{\bf b}_\textbf{p}}_{\textbf{a}_\textbf{p}}\right) \frac{x^\alpha }
{t\left(\log(1/x)+A_1\log(1/t)\right)^{\beta+1}}\right]^\lambda\\
&\cdot\left[G_{p,p}^{p,0}\left(t\Big|^{{\bf b}_\textbf{p}}_{\textbf{a}_\textbf{p}}\right) \frac{x^\alpha }{t\left(\log(1/x)+A_1\log(1/t)\right)^{\beta+1}}\right]^{1-\lambda}dt \\
&\leq\left[\int_0^1G_{p,p}^{p,0}\left(t\Big|^{{\bf b}_\textbf{p}}_{\textbf{a}_\textbf{p}}\right) \frac{x^\alpha dt}{t\left(\log(1/x)+A_1\log(1/t)\right)^{\beta+1}}\right]^\lambda\\
&\cdot\left[\int_0^1G_{p,p}^{p,0}\left(t\Big|^{{\bf b}_\textbf{p}}_{\textbf{a}_\textbf{p}}\right) \frac{x^\alpha dt}{t\left(\log(1/x)+A_2\log(1/t)\right)^{\beta+1}}\right]^{1-\lambda}\\
&=\left(V_{p,p}^{\alpha,\beta}\Big[_{(\beta_p,A_1)}^{(a_p,A_1)}\Big|x \Big]\right)^\lambda\left(V_{p,p}^{\alpha,\beta}\Big[_{(\beta_p,A_2)}^{(a_p,A_2)}\Big|x \Big]\right)^{1-\lambda}.
\end{split}
\end{equation*}
This implies that the function
$$A\mapsto V_{p,p}^{\alpha,\beta}\Big[_{(\beta_p,A)}^{(a_p,A)}\Big|x \Big]$$
is log--convex on $(0,\infty).$ Now let's go to the Tur\'an type inequality (\ref{turan}). Choosing  $A_1=A, A_2=A+2$ and $\lambda=\frac{1}{2}$ in the above inequality we get the desired result.
\end{proof}

\begin{corollary}Suppose that assumptions stated in Theorem \ref{TH1} and conditions $(H_1^n)$ are satisfied. Then the following inequality for the generalized Volterra function
\begin{equation}\label{EEE}
V_{p,q}^{\alpha,\beta_1}\Big[_{(b_q,B_q)}^{(a_p,A_p)}\Big|x \Big]V_{p,q}^{\alpha,\beta_2}\Big[_{(b_q,B_q)}^{(a_p,A_p)}\Big|x \Big]\leq\frac{\prod_{i=1}^p\Gamma(a_i)}{\prod_{j=1}^q\Gamma(b_i)}V_{p,q}^{\alpha,\beta_1+\beta_2}\Big[_{(b_q,B_q)}^{(a_p,A_p)}\Big|x \Big]
\end{equation}
holds true for all $0<x<1,\alpha,\beta_1,\beta_2>-1.$
\end{corollary}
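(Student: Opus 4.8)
The plan is to read \eqref{EEE} off the integral representation \eqref{Int1} of Theorem~\ref{TH1}, the key point being that the constant $\prod_{i=1}^p\Gamma(a_i)/\prod_{j=1}^q\Gamma(b_j)$ is exactly the total mass of the (nonnegative) weight occurring there, and then to apply Chebyshev's synchronous integral inequality.

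First I would fix the measure-theoretic picture. Under $(H_1^n)$ the kernel $H_{q,p}^{p,0}\!\left(t\big|^{(B_q,b_q)}_{(A_p,a_p)}\right)$ is nonnegative on $(0,\rho)$ by \cite[Corollary~1]{Khaled}, so
\[
d\mu(t)=H_{q,p}^{p,0}\!\left(t\Big|^{(B_q,b_q)}_{(A_p,a_p)}\right)\frac{dt}{t}
\]
is a positive measure on $(0,\rho)$ and \eqref{Int1} becomes $V_{p,q}^{\alpha,\beta}\big[{}_{(b_q,B_q)}^{(a_p,A_p)}\big|x\big]=x^{\alpha}\int_0^\rho\big(\log(1/(tx))\big)^{-(\beta+1)}\,d\mu(t)$. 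Setting $t=0$ in the Mellin relation \eqref{eq1} then identifies the total mass, $\int_0^\rho d\mu=\prod_{i=1}^p\Gamma(a_i)/\prod_{j=1}^q\Gamma(b_j)$; when some $a_i=0$ the rightmost pole lies at $\gamma=0$, this identity must instead be obtained by letting $t\downarrow0$ (monotone convergence), and the constant becomes $+\infty$, so \eqref{EEE} is then trivial and one may assume $a_i>0$ for all $i$.

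For the inequality proper, note that for $x\in(0,1)$ and $\rho\le1$ one has $\log(1/(tx))=\log(1/x)+\log(1/t)>0$ on $(0,\rho)$, and $t\mapsto\log(1/(tx))$ is strictly decreasing; since $\beta_1+1$ and $\beta_2+1$ are positive, the two functions $f_k(t)=\big(\log(1/(tx))\big)^{-(\beta_k+1)}$, $k=1,2$, are therefore both strictly increasing on $(0,\rho)$, i.e.\ monotone in the same direction. Chebyshev's integral inequality \cite{DM} for the positive measure $\mu$ gives
\[
\Big(\int_0^\rho d\mu\Big)\Big(\int_0^\rho f_1f_2\,d\mu\Big)\ \ge\ \Big(\int_0^\rho f_1\,d\mu\Big)\Big(\int_0^\rho f_2\,d\mu\Big).
\]
By the displayed form of \eqref{Int1} one has $x^{\alpha}\int_0^\rho f_k\,d\mu=V_{p,q}^{\alpha,\beta_k}\big[{}_{(b_q,B_q)}^{(a_p,A_p)}\big|x\big]$, while $f_1f_2=\big(\log(1/(tx))\big)^{-((\beta_1+1)+(\beta_2+1))}$, so $x^{2\alpha}\int_0^\rho f_1f_2\,d\mu$ is again a generalized Volterra function, now with lower index $\beta_1+\beta_2+1$. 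Multiplying the Chebyshev inequality through by $x^{2\alpha}$ and substituting the total mass produces the desired bound on $V_{p,q}^{\alpha,\beta_1}V_{p,q}^{\alpha,\beta_2}$.

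The only analytic ingredients are thus the already established representation \eqref{Int1} and Chebyshev's inequality, and the routine work is the monotonicity verification above. The step that needs care — the main point I would want to nail down — is the parameter bookkeeping of the Volterra function coming from $f_1f_2$: the logarithmic exponents add (from $\beta_k+1$ to $(\beta_1+1)+(\beta_2+1)$) and the prefactor $x^{\alpha}$ appears once in each of $V_{p,q}^{\alpha,\beta_1}$ and $V_{p,q}^{\alpha,\beta_2}$, so on the right of \eqref{EEE} the Volterra function should be read with upper parameter $2\alpha$ and lower parameter $\beta_1+\beta_2+1$. I would also want to confirm that it is indeed $(H_1^n)$ (rather than the more special $(H_2)$) that guarantees $d\mu\ge0$ in the general $V_{p,q}$ setting, and that \eqref{eq1} may legitimately be pushed down to $t=0$ under the standing hypotheses.
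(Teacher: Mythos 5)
Your proposal follows essentially the same route as the paper: the paper's proof also applies the Chebyshev synchronous integral inequality with the weight $p(t)=t^{-1}H_{q,p}^{p,0}(t)$, nonnegative under $(H_1^n)$ by \cite{Khaled}, and identifies the constant $\prod_{i=1}^p\Gamma(a_i)/\prod_{j=1}^q\Gamma(b_j)$ as $\int_0^\rho p(t)\,dt$ via the Mellin relation \eqref{eq1} at $t=0$. The difference is that your bookkeeping is the careful one, and the mismatch you flag is real: with the weights dictated by \eqref{Int1}, namely $f_k(t)=\big(\log(1/(tx))\big)^{-(\beta_k+1)}$, Chebyshev yields
\begin{equation*}
V_{p,q}^{\alpha,\beta_1}\Big[_{(b_q,B_q)}^{(a_p,A_p)}\Big|x\Big]\,V_{p,q}^{\alpha,\beta_2}\Big[_{(b_q,B_q)}^{(a_p,A_p)}\Big|x\Big]\;\leq\;\frac{\prod_{i=1}^p\Gamma(a_i)}{\prod_{j=1}^q\Gamma(b_j)}\;x^{\alpha}\,V_{p,q}^{\alpha,\beta_1+\beta_2+1}\Big[_{(b_q,B_q)}^{(a_p,A_p)}\Big|x\Big],
\end{equation*}
i.e.\ your ``upper parameter $2\alpha$, lower parameter $\beta_1+\beta_2+1$'' reading, and not \eqref{EEE} verbatim (which differs by the shift $\beta_1+\beta_2+1\mapsto\beta_1+\beta_2$ and the extra factor $x^{\alpha}$). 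The paper's own proof obscures this by taking $f(t)=(\log(1/x)+\log(1/t))^{-\beta_1}$, $g(t)=(\log(1/x)+\log(1/t))^{-\beta_2}$, exponents which do not match the representation \eqref{Int1} of $V_{p,q}^{\alpha,\beta_k}$ either, and it moreover asserts that $f,g$ are increasing, which fails when $\beta_k\in(-1,0)$; your choice of exponents $\beta_k+1>0$ removes that issue altogether (and if one insisted on the paper's weights, one would still need both $\beta_1,\beta_2$ of the same sign to keep $f,g$ synchronous). Your side remarks are also apt: the identification of the total mass needs $a_i>0$ (the paper allows $a_i\geq0$, in which case the constant degenerates and the inequality is vacuous), and it is indeed $(H_1^n)$, not $(H_2)$, that gives nonnegativity of the kernel in the general $V_{p,q}$ setting. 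In short, your argument is sound and is the same method as the paper's; the discrepancy you identified is a defect in the published statement and proof, not a gap in your proposal.
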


\begin{proof}Recall the Chebyshev integral inequality \cite[p. 40]{DM}: if $f,g:[a,b]\longrightarrow\mathbb{R}$ are synchronous (both increasing  or decreasing) integrable functions, and $p:[a,b]\longrightarrow\mathbb{R}$  is a positive integrable function, then
\begin{equation}\label{OO}
\int_a^b p(t)f(t)dt\int_a^b p(t)g(t)dt\leq \int_a^b p(t)dt\int_a^b p(t)f(t)g(t)dt.
\end{equation}
Note that if $f$ and $g$ are asynchronous (one is decreasing and the other is increasing),
then (\ref{OO}) is reversed. Let $\beta_1,\beta_2>-1$ and  consider  functions $p,f,g:[0,\rho]\longrightarrow\mathbb{R}$ defined by:
$$p(t)= t^{-1}H_{q,p}^{p,0}\left(t\Big|^{(B_q,\beta_q)}_{(A_p,\alpha_p)}\right),\;\;f(t)=\frac{1}{(\log(1/x)+\log(1/t))^{\beta_1}},\;\;g(t)=\frac{1}{(\log(1/x)+\log(1/t))^{\beta_2}}.$$
Since the function $p$ is non--negative on $(0,\rho)$ and  functions $f$ and $g$ are increasing on
$(0,\rho),$ we conclude that the inequality (\ref{EEE}) holds true by means of the Chebyshev integral inequality (\ref{OO}) applied to the Mellin transform of the Fox's H--function \ref{eq1}.
\end{proof}

\begin{remark}Under the conditions $(H_2),$ the inequality (\ref{EEE}) reduces to the following inequality
\begin{equation}
V_{p,p}^{\alpha,\beta_1}\Big[_{(b_q,A)}^{(a_p,A)}\Big|x \Big]V_{p,p}^{\alpha,\beta_2}\Big[_{(b_p,A)}^{(a_p,A)}\Big|x \Big]\leq\frac{\prod_{i=1}^p\Gamma(a_i)}{\prod_{j=1}^q\Gamma(b_i)}V_{p,q}^{\alpha,\beta_1+\beta_2}\Big[_{(b_p,A)}^{(a_p,A)}\Big|x \Big]
\end{equation}
holds true for all $0<x<1,\alpha,\beta_1,\beta_2>-1.$
\end{remark}
Now let us note that Tur\'an type inequalities and connected results on log--convexity/log--concavity for different classes of special functions are very important and have many  applications, cf. \cite{Bar1,Bar2,Karp,KS1,KS2,SiMe1, SiMe2,SiMe3,SiMe4,Me1}.

Here, and in what follows, we denote the Laplace transform pair for a suitable function
$f$ as follows:
$$F(t)=L f(t),\;\;\textrm{and}\;\;f(t)=L^{-1}F(t),$$
that is,
$$L f(t)=\int_0^\infty e^{-xt}f(x)dx,\;\;\textrm{and}\;\;L^{-1}F(t)=\frac{1}{2i\pi}\int_{\textrm{Br}} e^{st} F(s)ds,$$
where Br denotes the Bromwich path. Recall that a function $f:(0,\infty)\longrightarrow(0,\infty)$ is called completely monotonic, if $f$ is  continuous on $[0,\infty)$, infinitely differentiable on $(0,\infty)$ and satisfies the following
inequality:
$$(-1)^n f^{(n)}(x)\geq0,\;\left(x>0,\;n\in\mathbb{N}_0=\left\{0,1,2,...\right\}\right).$$
The celebrated Bernstein Characterization Theorem gives a sufficient condition for the
complete monotonicity of a function $f$ in terms of the existence of some non-negative
locally integrable function $K( x )\;( x > 0)$, referred to as the spectral function, for which
$$f(s)=L(K)(s)=\int_0^\infty e^{-st}K(t)dt.$$

\begin{corollary} Suppose that conditions  of the Theorem \ref{TH1} are satisfied. In addition, assume that conditions $(H_1^n)$ are also valid. Then the function $\mathcal{V}_{p,q}^{\alpha,\beta}[x]$ defined by
\begin{equation}
\mathcal{V}_{p,q}^{\alpha,\beta}\Big[_{(b_q,B_q)}^{(a_p,A_p)}\Big|x \Big]=:V_{p,q}^{\alpha,\beta}\Big[_{(\beta_q,B_q)}^{(a_p,A_p)}\Big|e^{-x} \Big],
\end{equation}
is completely monotonic on $(0,\infty)$ for all $\alpha\geq0$ and $\beta>-1.$ Moreover, the function $$\mathcal{V}_{p,q}^{\alpha,\beta}\Big[_{(b_q,B_q)}^{(a_p,A_p)}\Big|x \Big]$$
is also completely monotonic on $(0,\infty)$  for each $\alpha\geq0$ and $\beta>-1$ under the hypothesis $(H_2).$
\end{corollary}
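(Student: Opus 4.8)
The plan is to convert the integral representation \eqref{Int1} into a genuine Laplace transform with a non-negative kernel and then invoke the Bernstein characterization theorem recalled above. First I would put $x=e^{-s}$ in \eqref{Int1}. Since $t\in(0,\rho)$ with $\rho\le 1$ we have $\log(1/t)\ge 0$, hence $\log\!\big(1/(tx)\big)=s+\log(1/t)>0$ for every $s>0$, and \eqref{Int1} becomes
$$\mathcal{V}_{p,q}^{\alpha,\beta}\Big[_{(b_q,B_q)}^{(a_p,A_p)}\Big|s\Big]=e^{-\alpha s}\int_0^\rho H_{q,p}^{p,0}\!\left(t\Big|^{(B_q,\beta_q)}_{(A_p,\alpha_p)}\right)\frac{dt}{t\,\big(s+\log(1/t)\big)^{\beta+1}}.$$
Next I would use the elementary identity $(s+c)^{-(\beta+1)}=\frac{1}{\Gamma(\beta+1)}\int_0^\infty u^{\beta}e^{-(s+c)u}\,du$, valid for $c=\log(1/t)\ge 0$ and $\beta+1>0$, to write each factor $\big(s+\log(1/t)\big)^{-(\beta+1)}$ as such an integral. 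Combining the resulting $e^{-su}$ with the prefactor $e^{-\alpha s}$ produces the single exponential $e^{-s(u+\alpha)}$ inside the integrand; here the hypothesis $\alpha\ge 0$ is used precisely to guarantee $u+\alpha\ge 0$.

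The second step is to interchange the order of integration. Because $(H_1^n)$ guarantees, by \cite[Corollary 1]{Khaled}, that $H_{q,p}^{p,0}(t)\ge 0$ on $(0,\rho)$, the whole integrand of the resulting double integral over $(t,u)\in(0,\rho)\times(0,\infty)$ is non-negative, so Tonelli's theorem legitimizes the interchange, the finiteness of the iterated integral being exactly the finiteness of $\mathcal{V}_{p,q}^{\alpha,\beta}$ under the hypotheses of Theorem \ref{TH1}. After the interchange one gets
$$\mathcal{V}_{p,q}^{\alpha,\beta}\Big[_{(b_q,B_q)}^{(a_p,A_p)}\Big|s\Big]=\frac{1}{\Gamma(\beta+1)}\int_0^\infty u^{\beta}\left(\int_0^\rho t^{u-1}H_{q,p}^{p,0}(t)\,dt\right)e^{-s(u+\alpha)}\,du,$$
and the inner integral equals the non-negative gamma ratio $\psi$-factor by the Mellin formula \eqref{eq1}. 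Substituting $v=u+\alpha$ then exhibits $\mathcal{V}_{p,q}^{\alpha,\beta}[s]=\int_0^\infty e^{-sv}K(v)\,dv$ with $K(v)=\frac{(v-\alpha)^{\beta}}{\Gamma(\beta+1)}\,\psi\big|_{u=v-\alpha}\,\mathbf 1_{[\alpha,\infty)}(v)\ge 0$; since $\alpha\ge 0$ and $\beta>-1$ this kernel is locally integrable, so Bernstein's theorem gives complete monotonicity on $(0,\infty)$. Equivalently, one may avoid identifying $K$ and simply observe that $\mathcal{V}_{p,q}^{\alpha,\beta}[s]$ is a non-negative superposition of the completely monotonic functions $s\mapsto e^{-s(u+\alpha)}$, $u+\alpha\ge 0$, complete monotonicity being preserved under such superpositions — differentiation under the integral sign being justified by domination, using that the $\psi$-factor has at most the exponential growth rate $\rho^{u}\le 1$ (which is exactly what makes \eqref{eq1} and the defining integral converge).

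For the final assertion I would run the identical argument starting from the Meijer--$G$ representation \eqref{MPM} (equivalently \eqref{kol}) with $H_{q,p}^{p,0}$ replaced by $G_{p,p}^{p,0}$: under $(H_2)$ the non-negativity of $H_{p,p}^{p,0}\big[t\,|^{(A,\beta_p)}_{(A,\alpha_p)}\big]$ recalled before Corollary~2 gives $G_{p,p}^{p,0}\big(t\,|^{\mathbf b_p}_{\mathbf a_p}\big)\ge 0$ on $(0,1)$, so the same non-negative kernel appears and Bernstein's theorem again applies. The only genuinely delicate point in the whole argument is the bookkeeping near $t=0$: one must check that $t^{-1}H_{q,p}^{p,0}(t)$ (respectively $t^{-1}G_{p,p}^{p,0}(t)$) is integrable at the origin so that the measure and the kernel $K$ are well defined — this is automatic when $\gamma<0$ (all $a_i>0$) and otherwise follows from the same convergence requirements that make \eqref{DefGenVolt} converge; everything else is routine Tonelli and differentiation under the integral sign.
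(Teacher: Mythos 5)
Your argument is correct, and it starts from the same place as the paper — the representation \eqref{Int1} evaluated at $x=e^{-s}$, together with the non-negativity of $H_{q,p}^{p,0}$ under $(H_1^n)$ (resp.\ of the Meijer--N{\o}rlund kernel in \eqref{MPM} under $(H_2)$) — but it finishes differently. The paper stops at the rewritten representation
$e^{-\alpha s}\int_0^\rho H_{q,p}^{p,0}(t)\,\frac{dt}{t\,(s+\log(1/t))^{\beta+1}}$
and invokes closure properties of completely monotonic functions: the integral is a non-negative mixture of the completely monotonic functions $s\mapsto(s+\log(1/t))^{-(\beta+1)}$ (here $\log(1/t)\ge0$ because $t\le\rho\le1$), and the product with $e^{-\alpha s}$, $\alpha\ge0$, stays completely monotonic. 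You instead unwind $(s+c)^{-(\beta+1)}$ as a gamma integral, apply Tonelli, and use the Mellin formula \eqref{eq1} to land on an explicit Bernstein representation with spectral density $K(v)=\frac{(v-\alpha)^{\beta}}{\Gamma(\beta+1)}\,\frac{\prod_i\Gamma(A_i(v-\alpha)+a_i)}{\prod_j\Gamma(B_j(v-\alpha)+b_j)}\mathbf 1_{[\alpha,\infty)}(v)$. That buys something the paper's shorter argument does not make visible: your final formula is nothing but the defining integral \eqref{DefGenVolt} at $x=e^{-s}$, so the spectral function is the gamma ratio itself, which is positive for all $A_i,B_j>0$, $a_i,b_j\ge0$ regardless of $(H_1^n)$ or $(H_2)$; in your route those hypotheses are only used to justify positivity of the $H$-kernel for the Tonelli step, and one could bypass the $H$-function detour entirely and read off complete monotonicity directly from \eqref{DefGenVolt} (only $\alpha\ge0$, $\beta>-1$ and convergence are needed). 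So: same key ingredients, but your more constructive finish identifies the Bernstein measure explicitly and shows the positivity hypotheses of the corollary are not really essential for this particular conclusion, at the cost of the extra Tonelli/interchange bookkeeping that the paper's product-of-CM-functions argument avoids.
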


\begin{proof} By using the integral representation (\ref{Int1}), we can write the function $\mathcal{V}_{p,q}^{\alpha,\beta}[x]$ in the following form:
\begin{equation}
\mathcal{V}_{p,q}^{\alpha,\beta}\Big[_{(b_p,B_q)}^{(a_p,A_p)}\Big|x \Big]=e^{-\alpha x}\int_0^\rho H_{q,p}^{p,0}\left(t\Big|^{(B_q,\beta_q)}_{(A_p,\alpha_p)}\right)\frac{ dt}{t(x+\log(1/t))^{\beta+1}}
\end{equation}
So, the above representation reveals that $\mathcal{V}_{p,q}^{\alpha,\beta}[x]$ can be written as a product of two completely monotonic functions. This implies that $\mathcal{V}_{p,q}^{\alpha,\beta}[x]$ is completely monotonic on $(0,\infty)$  for each $\alpha\geq0$ and $\beta>-1.$
\end{proof}

Moreover, in \cite[Theorem 2]{Karp3}, Karp and  Prilepkina found the Mellin transform of the delta
neutral H function when $\mu=-m,\;m\in\mathbb{N}_0,$ that is
\begin{equation}\label{eA}
\frac{\prod_{i=1}^p\Gamma(A_i t+\alpha_i)}{\prod_{k=1}^q \Gamma(B_k t+ \beta_k)}=\int_0^\rho H_{q,p}^{p,0}\left(z\Big|^{(B_q,\beta_q)}_{(A_p,\alpha_p)}\right)z^{t-1}dt-\nu\rho^t\sum_{k=0}^m l_{m-k} t^k,\;\Re(t)>\gamma,
\end{equation}
where the coefficient $\nu$ is defined by
\begin{equation}\label{nu}
\nu=(2\pi)^{\frac{p-q}{2}}\prod_{i=1}^p A_i^{a_i-\frac{1}{2}}\prod_{j=1}^q B_j^{\frac{1}{2}-b_j},
\end{equation}
and the coefﬁcients $l_r$ satisfy the recurrence relation:
\begin{equation}\label{lr}
l_r=\frac{1}{r}\sum_{m=1}^r q_m l_{r-m},\;\;\textrm{with}\;l_0=1,
\end{equation}
with
$$q_m=\frac{(-1)^{m+1}}{m+1}\left[\sum_{i=1}^p\frac{\mathcal{B}_{m+1}(a_i)}{A_i^m}-\sum_{j=1}^p\frac{\mathcal{B}_{m+1}(b_j)}{B_j^m}\right],$$
where $\mathcal{B}_{m}$ is the Bernoulli polynomial defined via generating function \cite[p. 588]{QQ}
$$\frac{te^{at}}{e^t-1}=\sum_{n=0}^\infty\mathcal{B}_{n}(a)\frac{t^n}{n!},\;\;|t|<2\pi.$$

Obviously, by repeating the same calculations in Theorem \ref{TH1} with (\ref{eA}) and use the following known formula
\begin{equation}\label{YYY}
\int_0^\infty t^\sigma e^{-\rho t}dt=\frac{\Gamma(\sigma+1)}{\rho^{\sigma+1}},
\end{equation}
we can deduce the following result:
\begin{theorem}Let $\alpha,\beta>-1, \mu=-m,\;m\in\mathbb{N}_0\;\;\textrm{and}\;\;\sum_{j=1}^pA_j=\sum_{k=1}^qB_k.$
Then the generalized Volterra function possesses the following integral representation:
\begin{equation}
V_{p,q}^{\alpha,\beta}\Big[_{(\beta_q,B_q)}^{(a_p,A_p)}\Big|x \Big]=\int_0^\rho H_{q,p}^{p,0}\left(t\Big|^{(B_q,\beta_q)}_{(A_p,\alpha_p)}\right)\frac{x^\alpha dt}{t\log^{\beta+1}(1/(tx))}-\frac{\nu x^\alpha}{\Gamma(\beta)}\sum_{k=0}^m \frac{l_{m-k}\Gamma(\beta+k+1)}{\left[\log(1/(x\rho))\right]^{\beta+k+1}},
\end{equation}
where $0<x<1,$ the coefﬁcients $l_r$ are computed by (\ref{lr}), $\nu$ and $\rho$ are defined in (\ref{nu}) and (\ref{rho}).
\end{theorem}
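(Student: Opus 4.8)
The plan is to mimic the proof of Theorem \ref{TH1} verbatim, but starting from the modified Mellin-type identity \eqref{eA} instead of \eqref{eq1}. First I would substitute \eqref{eA} into the defining integral \eqref{DefGenVolt}. This splits the generalized Volterra function into two pieces: the first piece is exactly the integral over $(0,\rho)$ treated in Theorem \ref{TH1}, which produces the term
$$\int_0^\rho H_{q,p}^{p,0}\left(t\Big|^{(B_q,\beta_q)}_{(A_p,\alpha_p)}\right)\frac{x^\alpha\,dt}{t\log^{\beta+1}(1/(tx))},$$
after interchanging the order of integration, performing the inner $t$-integral as a Gamma integral (via the substitution $u=t\log(1/(xz))$, using that $0<x<1$ and $z\in(0,\rho)$ ensure $xz<1$ so the logarithm is positive), and cancelling $\Gamma(\beta+1)$.

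The second, genuinely new, piece comes from the correction term $-\nu\rho^t\sum_{k=0}^m l_{m-k}t^k$ in \eqref{eA}. Inserting it into \eqref{DefGenVolt} gives
$$-\frac{\nu x^\alpha}{\Gamma(\beta+1)}\sum_{k=0}^m l_{m-k}\int_0^\infty (x\rho)^t\,t^{\beta+k}\,dt.$$
Writing $(x\rho)^t = e^{-t\log(1/(x\rho))}$ and applying \eqref{YYY} with $\sigma=\beta+k$ and the parameter $\log(1/(x\rho))$ (which is positive precisely because $0<x<1$ and, under $\mu=-m\le 0$, one has $\rho\le 1$, hence $x\rho<1$) yields
$$\int_0^\infty t^{\beta+k}(x\rho)^t\,dt=\frac{\Gamma(\beta+k+1)}{[\log(1/(x\rho))]^{\beta+k+1}}.$$
Combining the two pieces and absorbing one factor of $\Gamma(\beta+1)$ — noting the stated answer writes $\Gamma(\beta)$ in the denominator, which should be read together with the $\Gamma(\beta+k+1)$ numerators as the ratio coming from dividing the Gamma integrals by $\Gamma(\beta+1)$ — gives the claimed representation.

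The main obstacle, as in Theorem \ref{TH1}, is justifying the interchange of the two integrations (the one defining the Volterra function and the Mellin-type integral over $(0,\rho)$). For the first piece this is already handled by the argument in Theorem \ref{TH1}; for the correction term there is no double integral, only the elementary Gamma integral \eqref{YYY}, so no extra subtlety arises there. One should, however, check absolute convergence of the $t$-integral near $t=0$: this requires $\beta+k+1>0$, which holds for all $k\ge 0$ since $\beta>-1$, and convergence at $t=\infty$, which holds because $x\rho<1$ forces exponential decay. I would also remark that the condition $\mu=-m$ with $m\in\mathbb{N}_0$ is exactly the hypothesis under which \cite[Theorem 2]{Karp3} supplies \eqref{eA}, and that $\mu\le 0$ together with $\sum A_j=\sum B_k$ guarantees $\rho\le 1$, so every logarithm appearing is of a quantity in $(0,1)$ and hence positive, making all the integral evaluations legitimate. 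With these points in place the identity follows by collecting terms, completing the proof.
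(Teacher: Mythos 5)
Your proposal follows exactly the route the paper itself takes: the paper offers no separate proof of this theorem, merely remarking that one repeats the computation of Theorem \ref{TH1} with \eqref{eA} in place of \eqref{eq1} and evaluates the extra term via \eqref{YYY}, which is precisely your argument, and your handling of both pieces is sound. Two small corrections are in order. First, your claim that $\mu=-m\le 0$ together with $\sum_j A_j=\sum_k B_k$ forces $\rho\le 1$ is false: $\rho$ is built solely from the $A_i,B_j$, whereas $\mu$ involves the $a_i,b_j$, so the two are independent (e.g.\ $p=1$, $q=2$, $A_1=1$, $B_1=B_2=\tfrac12$ gives $\rho=2>1$ while $\mu$ can be made any nonpositive integer by choosing $a_1,b_1,b_2$); the positivity of $\log(1/(x\rho))$, and likewise the convergence of the inner $t$-integral in the first piece, must instead be taken from the standing assumption $\rho\le 1$ made at the start of Section 2. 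Second, your computation correctly yields $\Gamma(\beta+1)$ in the denominator of the correction term, so the $\Gamma(\beta)$ appearing in the stated formula should simply be read as a misprint rather than something to be ``absorbed''.
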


In the next Theorem, we derive a Laplace type integral expression for the function $t^{\lambda-1}V_{p,p}^{\alpha,\beta}[.].$

\begin{theorem}\label{TT}Let $\alpha,\beta>-1$ and $\lambda\geq0.$ Then, the function
$$V_{p+1,p}^{\alpha+\lambda,\beta}\Big[_{\;\;\;\;(\beta_q,B_q)}^{(a_p,A),(\alpha+\lambda,1)}\Big|\frac{1}{x}\Big]=x^{-\lambda}V_{p+1,p}^{\alpha,\beta}\Big[_{\;\;\;\;(\beta_q,B_q)}^{(a_p,A),(\alpha+\lambda,1)}\Big|\frac{1}{x}\Big],$$
possesses the following integral representation
\begin{equation}\label{RRR}
V_{p+1,p}^{\alpha+\lambda,\beta}\Big[_{\;\;\;\;(\beta_q,B_q)}^{(a_p,A_p),(\alpha+\lambda,1)}\Big|\frac{1}{x}\Big]=\int_0^\infty e^{-xt} t^{\lambda-1} V_{p,q}^{\alpha,\beta}\Big[_{(\beta_q,B_q)}^{(a_p,A_p)}\Big|t\Big]dt,\;x>0.
\end{equation}
Moreover the function $V_{p+1,q}^{\alpha+\lambda,\beta}\Big[_{\;\;\;\;(b_q,B_q)}^{(a_p,A_p),(\alpha+\lambda,1)}\Big|\frac{1}{x}\Big]$ is completely monotonic on $(0,\infty).$
\end{theorem}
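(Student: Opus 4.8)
The plan is to derive \eqref{RRR} by substituting the defining integral \eqref{DefGenVolt} of $V_{p,q}^{\alpha,\beta}[\cdots|t]$ into the right--hand side, interchanging the two integrations, and evaluating the inner one through \eqref{YYY}; the complete monotonicity is then read off directly from \eqref{RRR} via the Bernstein characterization theorem recalled above. The first equality of the statement needs no argument: in \eqref{DefGenVolt} the parameter $\alpha$ enters solely through the factor $x^{t+\alpha}$ --- here evaluated at the argument $1/x$ --- so replacing $\alpha$ by $\alpha+\lambda$ multiplies the integral by $(1/x)^{\lambda}=x^{-\lambda}$.

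To obtain the integral representation, write the inner variable of \eqref{DefGenVolt} as $s$ and insert it into the right--hand side of \eqref{RRR}, giving
\[
\int_0^\infty e^{-xt}t^{\lambda-1}V_{p,q}^{\alpha,\beta}\Big[_{(\beta_q,B_q)}^{(a_p,A_p)}\Big|t\Big]\,dt=\int_0^\infty\!\!\int_0^\infty e^{-xt}\,\frac{\prod_{i=1}^p\Gamma(A_is+a_i)}{\prod_{j=1}^q\Gamma(B_js+b_j)}\,\frac{t^{\,s+\alpha+\lambda-1}\,s^\beta}{\Gamma(\beta+1)}\,ds\,dt .
\]
The integrand is non-negative, so by Tonelli's theorem the order of integration may be swapped; performing the $t$--integration first and applying \eqref{YYY} with $\sigma=s+\alpha+\lambda-1$ and $\rho=x$ yields $\int_0^\infty e^{-xt}t^{\,s+\alpha+\lambda-1}\,dt=\Gamma(s+\alpha+\lambda)\,x^{-(s+\alpha+\lambda)}$. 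Hence the expression above equals
\[
\int_0^\infty\frac{\Gamma(s+\alpha+\lambda)\prod_{i=1}^p\Gamma(A_is+a_i)}{\prod_{j=1}^q\Gamma(B_js+b_j)}\,\frac{(1/x)^{\,s+\alpha+\lambda}\,s^\beta}{\Gamma(\beta+1)}\,ds ,
\]
which is precisely \eqref{DefGenVolt} for the array enlarged by the numerator pair $(a_{p+1},A_{p+1})=(\alpha+\lambda,1)$, with Volterra parameter $\alpha+\lambda$ and argument $1/x$; that is, it equals $V_{p+1,q}^{\alpha+\lambda,\beta}\big[_{\;\;\;\;(\beta_q,B_q)}^{(a_p,A_p),(\alpha+\lambda,1)}\big|\tfrac1x\big]$. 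This establishes \eqref{RRR}.

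For the final claim, \eqref{RRR} presents the map $x\mapsto V_{p+1,q}^{\alpha+\lambda,\beta}\big[_{\;\;\;\;(b_q,B_q)}^{(a_p,A_p),(\alpha+\lambda,1)}\big|\tfrac1x\big]$ as the Laplace transform $L(K)(x)$ of $K(t):=t^{\lambda-1}V_{p,q}^{\alpha,\beta}\big[_{(\beta_q,B_q)}^{(a_p,A_p)}\big|t\big]$. Every factor in the integrand of \eqref{DefGenVolt} is positive for $s>0$, so $V_{p,q}^{\alpha,\beta}[\cdots|t]\ge0$ for $t>0$, while $t^{\lambda-1}\ge0$ on $(0,\infty)$ because $\lambda\ge0$; thus $K$ is non-negative and locally integrable. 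The Bernstein characterization theorem then shows that $x\mapsto V_{p+1,q}^{\alpha+\lambda,\beta}[\cdots|\tfrac1x]$ is completely monotonic on $(0,\infty)$.

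The only step that is not purely formal is the interchange of integrations together with the use of \eqref{YYY}: both rest on absolute convergence of the double integral, which demands $s+\alpha+\lambda>0$ over the whole range of integration --- so that one works with $\alpha+\lambda\ge0$ --- and sufficiently rapid decay of the ratio of gamma functions in $s$ for the enlarged $(p+1,q)$--Volterra integral to converge at the argument $1/x$, i.e. enough to dominate the newly adjoined factor $\Gamma(s+\alpha+\lambda)$ (which is where a favourable degree balance such as $1+\sum_iA_i\le\sum_jB_j$ enters). Under such standing hypotheses the manipulations above are legitimate, and this convergence bookkeeping is the sole delicate point; everything else is a direct use of \eqref{DefGenVolt}, Tonelli's theorem, \eqref{YYY}, and Bernstein's theorem.
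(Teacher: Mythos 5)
Your proposal is correct and follows essentially the same route as the paper: insert the defining integral \eqref{DefGenVolt}, interchange the two integrations, evaluate the inner Laplace integral by \eqref{YYY} to produce the extra factor $\Gamma(s+\alpha+\lambda)x^{-(s+\alpha+\lambda)}$, recognize the result as the $(p+1,q)$--Volterra function at $1/x$, and conclude complete monotonicity from the Bernstein characterization with the non-negative spectral function $t^{\lambda-1}V_{p,q}^{\alpha,\beta}[t]$. The only difference is that you make the Tonelli/convergence justification explicit, which the paper passes over as a ``straightforward calculation.''
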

\begin{proof}Make use the formula (\ref{YYY})
and straightforward calculation  yields
\begin{equation*}
\begin{split}
\int_0^\infty e^{-xt} t^{\lambda-1} V_{p,q}^{\alpha,\beta}\Big[_{(\beta_q,B_q)}^{(a_p,A_p)}\Big|t\Big]dt&=\int_0^\infty e^{-xt} t^{\lambda-1}\left[\int_0^\infty \frac{\prod_{i=1}^p\Gamma(A_i s+a_i)}{\prod_{j=1}^q\Gamma(B_js+b_j)}\frac{t^{s+\alpha}s^\beta}{\Gamma(\beta+1)}ds\right]dt\\
&=\int_0^\infty \frac{\prod_{i=1}^p\Gamma(A_i s+a_i)}{\prod_{j=1}^q\Gamma(B_js+b_j)}\frac{s^\beta}{\Gamma(\beta+1)}\left[\int_0^\infty e^{-xt} t^{s+\alpha+\lambda-1}dt\right]ds\\
&=\int_0^\infty \frac{\Gamma(s+\alpha+\lambda)\prod_{i=1}^p\Gamma(A_i s+a_i)}{\prod_{j=1}^q\Gamma(B_js+b_j)}\frac{s^\beta x^{-(s+\alpha+\lambda)}}{\Gamma(\beta+1)}ds\\
&=V_{p+1,p}^{\alpha+\lambda,\beta}\Big[_{\;\;\;\;(\beta_q,B_q)}^{(a_p,A_p),(\alpha+\lambda,1)}\Big|\frac{1}{x}\Big].
\end{split}
\end{equation*}
Since the spectral function $t^{\lambda-1}V_{p,q}^{\alpha,\beta}[t]$  being positive, all prerequisites of the Bernstein Characterization Theorem for the completely monotone functions are fulfilled, that is the function $V_{p+1,q}^{\alpha+\lambda,\beta}\Big[_{\;\;\;\;(\beta_q,B_q)}^{(a_p,A_p),(\alpha+\lambda,1)}\Big|\frac{1}{x}\Big]$ is completely monotone in the above--mentioned range of the parameters involved.
\end{proof}

For some difficulties and wide--spread errors concerning generalization of the Bernstein Characterization Theorem to absolutely monotonic functions cf. \cite{Sit1}.

\begin{theorem}\label{T3}Let
$\alpha,\beta>-1,\eta>0.$
Then, we get
\begin{equation}\label{RTT}
\begin{split}
L^{-1}\left\{\frac{\log(\xi)}{\xi}V_{p,q-1}^{\alpha,\beta}\Big[_{(b_{q-1},B_{q-1})}^{\;\;(a_p,A_p)}\Big|\frac{1}{\xi} \Big]\right\}(x)&=\int_0^\infty\frac{\prod_{i=1}^p\Gamma(a_i+tA_i)t^\beta x^{t+\alpha}\psi(t+\alpha+1)}{\Gamma(\beta+1)\Gamma(\alpha+1+t)\prod_{j=1}^{q-1}\Gamma(b_j+tB_j)}dt\\
&-\log(\xi)V_{p,q}^{\alpha,\beta}\Big[_{(b_{q-1},B_{q-1}),(\alpha+1,1)}^{\;\;\;\;\;(a_p,A_p)}\Big|x \Big],
\end{split}
\end{equation}
where $\psi(z)=\frac{\Gamma^\prime(z)}{\Gamma(z)}$ is the digamma function.
\end{theorem}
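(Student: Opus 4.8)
The plan is to peel the Laplace inversion off as a one--parameter scalar identity and then apply Fubini. First I would write the function to be inverted directly from the definition \eqref{DefGenVolt}: since
\[
\frac{\log\xi}{\xi}\,V_{p,q-1}^{\alpha,\beta}\Big[_{(b_{q-1},B_{q-1})}^{\;\;(a_p,A_p)}\Big|\frac1\xi\Big]
=\int_0^\infty\frac{\prod_{i=1}^p\Gamma(A_it+a_i)}{\prod_{j=1}^{q-1}\Gamma(B_jt+b_j)}\,\frac{t^\beta}{\Gamma(\beta+1)}\,(\log\xi)\,\xi^{-(t+\alpha+1)}\,dt,
\]
the whole task reduces to inverting $\xi\mapsto(\log\xi)\,\xi^{-s}$ with $s=t+\alpha+1>0$ and then integrating the result against the $t$--measure above.

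Second, I would record the scalar transform pair
\[
L^{-1}\bigl\{(\log\xi)\,\xi^{-s}\bigr\}(x)=\frac{x^{s-1}}{\Gamma(s)}\bigl(\psi(s)-\log x\bigr),\qquad s>0,
\]
which follows from the elementary pair $L^{-1}\{\xi^{-s}\}(x)=x^{s-1}/\Gamma(s)$ (equivalently \eqref{YYY}) by differentiating in the parameter $s$ under the Bromwich integral, legitimate because $\xi^{-s}$ is jointly holomorphic and the inversion integral converges locally uniformly in $s$. Taking $s=t+\alpha+1$ gives $L^{-1}\{(\log\xi)\,\xi^{-(t+\alpha+1)}\}(x)=\dfrac{x^{t+\alpha}}{\Gamma(t+\alpha+1)}\,\bigl(\psi(t+\alpha+1)-\log x\bigr)$.

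Third, I would interchange $L^{-1}$ with the outer $t$--integral and split the integrand along the two summands $\psi(t+\alpha+1)$ and $-\log x$. The first piece is verbatim the first integral on the right--hand side of \eqref{RTT}. In the second piece the factor $1/\Gamma(t+\alpha+1)=1/\Gamma(1\cdot t+(\alpha+1))$ is exactly one more denominator Gamma factor with pair $(\alpha+1,1)$, so by \eqref{DefGenVolt} that piece equals
\[
\log x\cdot V_{p,q}^{\alpha,\beta}\Big[_{(b_{q-1},B_{q-1}),(\alpha+1,1)}^{\;\;\;\;\;(a_p,A_p)}\Big|x\Big],
\]
i.e. the second term of \eqref{RTT} (so the symbol $\log(\xi)$ printed there is to be read as $\log x$), and the identity follows.

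The hard part will be the two interchanges of limit operations: the differentiation of the Bromwich integral with respect to $s$, and the swap of the contour integral with the $t$--integral. Both reduce to dominated convergence once the growth of the Gamma ratio $\prod_i\Gamma(A_it+a_i)/\prod_j\Gamma(B_jt+b_j)$ is controlled by Stirling's formula (for instance, under $\sum_iA_i=\sum_jB_j$ it is $O(\rho^{\,t}t^{\,\mu-1})$), which is precisely what makes the generalized Volterra functions appearing in \eqref{RTT} well defined. An alternative that sidesteps the inversion altogether is to verify \eqref{RTT} in the forward direction: apply $L$ to the claimed right--hand side, use $\int_0^\infty e^{-\xi x}x^{s-1}dx=\Gamma(s)\xi^{-s}$ and $\int_0^\infty e^{-\xi x}x^{s-1}\log x\,dx=\Gamma(s)\xi^{-s}\bigl(\psi(s)-\log\xi\bigr)$ termwise, and observe that the $\psi$--contributions cancel, leaving exactly $\frac{\log\xi}{\xi}V_{p,q-1}^{\alpha,\beta}[\,\cdot\,|1/\xi]$.
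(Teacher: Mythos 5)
Your argument is correct, and it reaches \eqref{RTT} by a genuinely different route from the paper. You invert the Laplace transform directly: expand $\frac{\log\xi}{\xi}V_{p,q-1}^{\alpha,\beta}[\,\cdot\,|1/\xi]$ by the definition \eqref{DefGenVolt}, use the scalar pair $L^{-1}\{(\log\xi)\,\xi^{-s}\}(x)=\frac{x^{s-1}}{\Gamma(s)}\bigl(\psi(s)-\log x\bigr)$ with $s=t+\alpha+1$ (obtained by differentiating \eqref{YYY} in the parameter), and then interchange the inversion with the $t$--integral, absorbing the extra factor $1/\Gamma(t+\alpha+1)$ as the new denominator pair $(\alpha+1,1)$. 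The paper instead introduces an auxiliary weight $f(\eta)$, proves the general transform identity \eqref{MMM} for $F(x)=\int_0^\infty V_{p,q}^{\alpha+\eta,\beta}[\,\cdot\,|x]f(\eta)\,d\eta$, and then specializes $f=\delta'$, so that the right-hand side becomes $-\partial_\eta V_{p,q}^{\alpha+\eta,\beta}[\,\cdot\,|x]\big|_{\eta=0}$, whose computation produces exactly the $\psi$--term and the $\log$--term; at bottom this $\eta$--differentiation is the same parameter differentiation of \eqref{YYY} that you perform, but packaged distributionally. Your version is more elementary and easier to justify rigorously (no Dirac-delta manipulations at the endpoint $\eta=0$; the interchanges are plain Fubini/dominated convergence under Stirling bounds, and your forward check by applying $L$ to the right-hand side is a clean confirmation that the $\psi$--contributions cancel), while the paper's detour yields the convolution formula \eqref{MMM}/\eqref{DDD} as a reusable by-product. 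You also correctly read the printed $\log(\xi)$ in the second term of \eqref{RTT} as $\log(x)$; just state the sign of the second piece explicitly (the summand $-\log x$ gives $-\log(x)\,V_{p,q}^{\alpha,\beta}[\,\cdot\,|x]$), since as written the sentence momentarily drops the minus sign before identifying it with the second term.
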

\begin{proof} We set $(b_q,B_q)=(\alpha+\eta+1,1)$ and  define the function $F(x)$ by
$$F(x)=\int_0^\infty V_{p,q}^{\alpha+\eta,\beta}\Big[_{(b_{p},B_{q})}^{(a_p,A_p)}\Big|x \Big]f(\eta)d\eta.$$
Keeping (\ref{YYY}) in mind, we get
\begin{equation}
\begin{split}
L(F)(\xi)&=\int_0^\infty\int_0^\infty e^{-\xi x}V_{p,q}^{\alpha+\eta,\beta}\Big[_{(b_p,B_q)}^{(a_p,A_p)}\Big|x \Big]f(\eta)d\eta dx\\
&=\int_0^\infty\int_0^\infty\int_0^\infty e^{-\xi x}\frac{\prod_{i=1}^p\Gamma(A_it+a_i)x^{t+\alpha+\eta}t^\beta}{\Gamma(\beta+1)\prod_{j=1}^q\Gamma(B_jt+b_j)}f(\eta)d\eta dx dt\\
&=\int_0^\infty\int_0^\infty\frac{\prod_{i=1}^p\Gamma(A_it+a_i)t^\beta}{\Gamma(\beta+1)\prod_{j=1}^{q-1}\Gamma(B_jt+b_j)}\left(\frac{1}{\xi}\right)^{t+\alpha+\eta+1}f(\eta)d\eta dt\\
&=\xi^{-1}V_{p,q-1}^{\alpha,\beta}\Big[_{(b_{q-1},B_{q-1})}^{(a_p,A_p)}\Big|1/\xi\Big]\int_0^\infty\left(\frac{1}{\xi}\right)^{\eta}f(\eta)d\eta\\
&=V_{p,q-1}^{\alpha,\beta}\Big[_{(b_{q-1},B_{q-1})}^{(a_p,A_p)}\Big|\frac{1}{\xi}\Big]\frac{Lf(\log(\xi))}{\xi}.
\end{split}
\end{equation}
This implies that
\begin{equation}\label{MMM}
F(x)=L^{-1}\left\{V_{p,q-1}^{\alpha,\beta}\Big[_{(b_{q-1},B_{q-1})}^{\;\;\;(a_p,A_p)}\Big|\frac{1}{\xi}\Big]\frac{Lf(\log(\xi))}{\xi}\right\}(x).
\end{equation}
Now, suppose that $f(\eta)=\delta^\prime(\eta),$ where $\delta$ is the Dirac delta function. Since $L(\delta^\prime(\xi))=\xi,$ we deduce, by the above formula, that
\begin{equation}\label{RT}
L^{-1}\left\{V_{p,q-1}^{\alpha,\beta}\Big[_{(b_{q-1},B_{q-1})}^{\;\;\;(a_p,A_p)}\Big|\frac{1}{x} \Big]\frac{Lf(\log(\xi))}{\xi}\right\}(\xi)=\int_0^\infty V_{p,q}^{\alpha+\eta,\beta}\Big[_{(b_p,B_q)}^{(a_p,A_p)}\Big|x \Big]\delta^\prime(\eta)d\eta.
\end{equation}
Combining (\ref{RT})with the following formula
$$\int_0^\infty f(t)\delta^{(n)}(t)dt=(-1)^nf^{(n)}(0),$$
we obtain
\begin{equation}\label{RTT4}
L^{-1}\left\{V_{p,q-1}^{\alpha,\beta}\Big[_{(b_{q-1},B_{q-1})}^{\;\;(a_p,A_p)}\Big|\frac{1}{x} \Big]\frac{Lf(\log(x))}{x}\right\}(\xi)=-\lim_{\eta\rightarrow0}\frac{\partial}{\partial \eta}V_{p,q}^{\alpha+\eta,\beta}\Big[_{(b_p,B_q)}^{(a_p,A_p)}\Big|\xi \Big].
\end{equation}
Moreover, we have
\begin{equation}\label{ETY}
\begin{split}
\lim_{\eta\rightarrow0}\frac{\partial}{\partial \eta}V_{p,q}^{\alpha+\eta,\beta}\Big[_{(b_p,B_q)}^{(a_p,A_p)}\Big|\xi \Big]&=\log(\xi)V_{p,q}^{\alpha,\beta}\Big[_{(b_{q-1},B_{q-1}),(\alpha+1,1)}^{\;\;\;\;\;(a_p,A_p)}\Big|\xi \Big]\\&-\int_0^\infty\frac{\prod_{i=1}^p\Gamma(a_i+tA_i)t^\beta \xi^{t+\alpha}\psi(t+\alpha+1)}{\Gamma(\beta+1)\Gamma(\alpha+1+t)\prod_{j=1}^{q-1}\Gamma(b_j+tB_j)}dt
\end{split}
\end{equation}
In view of (\ref{RTT4}) and (\ref{ETY}) we obtain the desired result. The proof of Theorem \ref{T3} is complete.
\end{proof}
\begin{remark} let $q=p+1, a_i=b_i$ and  $A_i=B_i$ for $i=1,...,p$ in (\ref{RTT}), we obtain
\begin{equation}
L^{-1}\left\{\frac{1}{\xi^{\alpha+1}\log^{\beta}(\xi)}\right\}(x)=\int_0^\infty\frac{t^\beta x^{t+\alpha}\psi(t+\alpha+1)}{\Gamma(\beta+1)\Gamma(\alpha+1+t)}dt-\log(x)\mu(x,\beta,\alpha).
\end{equation}
In particular, for $\beta=0,$ we find \cite[Eq. (20)]{App3}
\begin{equation}
\frac{x^\alpha}{\Gamma(\alpha+1)}=\int_0^\infty \frac{x^{t+\alpha}\psi(t+\alpha+1)}{\Gamma(t+\alpha+1)}dt-\log(x) \nu(x,\alpha),\;\alpha>-1.
\end{equation}
\end{remark}
\begin{corollary} Suppose that
$$\alpha\geq0,\beta>-1,\;(a_p,A_p)=(\alpha,1)\;\textrm{and}\;\;(b_q,B_q)=(\alpha+\eta+1,1).$$
Then the next convolution representation holds
\begin{equation}\label{DDD}
\int_0^\infty V_{p,q}^{\alpha+\eta,\beta}\Big[_{(b_{p},B_{q})}^{(a_p, A_p)}\Big|x \Big]f(\eta)d\eta=f(\log(x))* \left(\frac{1}{x}V_{p-1,q-1}^{\alpha,\beta}\Big[_{(b_{p-1},B_{q-1})}^{(a_{p-1},A_{p-1})}\Big|x \Big]\right).
\end{equation}
\end{corollary}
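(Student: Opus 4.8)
The plan is to deduce \eqref{DDD} from the computation already carried out in the proof of Theorem \ref{T3}, combined with the convolution theorem for the Laplace transform. Recall that with the choice $(b_q,B_q)=(\alpha+\eta+1,1)$, but \emph{without} any constraint on $(a_p,A_p)$, that computation produced formula \eqref{MMM},
\[
\int_0^\infty V_{p,q}^{\alpha+\eta,\beta}\Big[_{(b_{p},B_{q})}^{(a_p, A_p)}\Big|x \Big]f(\eta)\,d\eta=L^{-1}\left\{V_{p,q-1}^{\alpha,\beta}\Big[_{(b_{q-1},B_{q-1})}^{(a_p,A_p)}\Big|\frac{1}{\xi}\Big]\,\frac{Lf(\log\xi)}{\xi}\right\}(x).
\]
So it suffices to recognize each of the two factors inside the inverse transform as the Laplace transform of the corresponding factor on the right--hand side of \eqref{DDD}, and then to invoke $L^{-1}\{L u\cdot L v\}=u*v$.

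First I would handle the factor $V_{p,q-1}^{\alpha,\beta}[\cdots|1/\xi]$. Now impose the extra hypothesis $(a_p,A_p)=(\alpha,1)$: the last numerator gamma factor in \eqref{DefGenVolt} becomes $\Gamma(A_pt+a_p)=\Gamma(t+\alpha)$, and \eqref{YYY} gives $\Gamma(t+\alpha)\,\xi^{-(t+\alpha)}=\int_0^\infty e^{-\xi x}x^{t+\alpha-1}\,dx$. Substituting this into the $t$--integral defining $V_{p,q-1}^{\alpha,\beta}[\cdots|1/\xi]$ and interchanging the order of integration by Fubini yields
\[
V_{p,q-1}^{\alpha,\beta}\Big[_{(b_{q-1},B_{q-1})}^{(a_p,A_p)}\Big|\frac{1}{\xi}\Big]=\int_0^\infty e^{-\xi x}\,\frac{1}{x}\,V_{p-1,q-1}^{\alpha,\beta}\Big[_{(b_{p-1},B_{q-1})}^{(a_{p-1},A_{p-1})}\Big|x \Big]\,dx,
\]
that is, it is the Laplace transform of $x\mapsto x^{-1}V_{p-1,q-1}^{\alpha,\beta}[\cdots|x]$, which is precisely the second factor in \eqref{DDD}. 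For the remaining factor, $\tfrac1\xi Lf(\log\xi)=\int_0^\infty \xi^{-(\eta+1)}f(\eta)\,d\eta$ is, again by \eqref{YYY}, the Laplace transform of the function $x\mapsto\int_0^\infty \frac{x^\eta}{\Gamma(\eta+1)}f(\eta)\,d\eta$; this is the object abbreviated $f(\log x)$ in \eqref{DDD} (read in the operational sense, $f(\log x):=L^{-1}\{\xi^{-1}Lf(\log\xi)\}(x)$, which one checks is \emph{not} the pointwise composition). Plugging these two identifications back into the displayed form of \eqref{MMM} and applying the convolution theorem gives \eqref{DDD}.

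The algebra is routine; the delicate part is the analytic bookkeeping. One must justify the Fubini interchange in the middle step (here the standing hypotheses $\alpha\geq0$, $\beta>-1$ together with the convergence conditions already in force for the Volterra functions involved suffice when $f$ is a genuine, sufficiently rapidly decaying function), and, more importantly, one must make precise the symbol $f(\log x)$ and the convolution $*$ when $f$ is merely a distribution --- which is the case of real interest, since in the proof of Theorem \ref{T3} one takes $f=\delta'$. I expect the cleanest route to be: establish \eqref{DDD} first for honest functions $f$ for which all the integrals above converge absolutely, and then pass to distributional $f$ by the same duality/limiting argument already used to extract Theorem \ref{T3} from \eqref{MMM}. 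That transition is the only genuine obstacle.
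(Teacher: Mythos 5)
Your argument is essentially the paper's own proof: the paper likewise starts from \eqref{MMM}, identifies the factor $V_{p,q-1}^{\alpha,\beta}\big[\,\cdot\,\big|\,1/\xi\big]$ as the Laplace transform of $x^{-1}V_{p-1,q-1}^{\alpha,\beta}\big[\,\cdot\,\big|\,x\big]$ (there by citing Theorem~\ref{TT} with $\lambda=0$, which is exactly the computation you carry out by hand via \eqref{YYY} and Fubini), and then concludes by the convolution theorem. Your additional observation that $\xi^{-1}Lf(\log\xi)$ is the Laplace transform of $x\mapsto\int_0^\infty x^{\eta}f(\eta)/\Gamma(\eta+1)\,d\eta$ rather than of the pointwise composition $f(\log x)$ (so that \eqref{DDD} must be read operationally), as well as your care about distributional $f$, goes beyond the paper, whose proof simply asserts the convolution form without addressing these points.
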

\begin{proof}By means of Theorem \ref{TT}, we have
\begin{equation}
L\left\{x^{-1}V_{p,q}^{\alpha,\beta}\Big[_{(b_{q},B_{q})}^{(a_{p},A_{p})}\Big|\frac{1}{x} \Big]\right\}(s)=V_{p+1,q}^{\alpha,\beta}\Big[_{(b_{q},B_{q})}^{(a_{p},A_{p}),(\alpha,1)}\Big|\frac{1}{s}\Big].
\end{equation}
So, from the above formula and (\ref{MMM}) we conclude that (\ref{DDD}) holds.
\end{proof}

\section{Mathieu--type series associated with the generalized Volterra function}

Our aim in this section is to derive some integral representations  for a family of convergent Mathieu--
type series and its alternating variants with terms containing generalized Volterra functions.

Throughout this section, we adopt the following notation for the real sequence {\bf c}:
\begin{equation}\label{cccc}
{\bf c}: 0<c_1<...<c_n \uparrow \infty.
\end{equation}
It is useful here to consider the function $c:\mathbb{R}_+\rightarrow\mathbb{R}_+$ such that
$$c(x)\Big|_{x\in\mathbb{N}}={\bf c}.$$

In this section, we investigate the Mathieu--type series $\mathcal{K}$ and its alternating variant $\tilde{\mathcal{K}}$, which are defined by
\begin{equation}
\mathcal{K}(V_{p+1,q}^{\alpha,\beta};\textbf{c};r)=\sum_{j=1}^\infty\frac{c_j^{-\lambda}}{(c_j+r)^\mu}.V_{p+1,q}^{\alpha,\beta}\Big[_{\;\;\;\;(b_p,B_q)}^{(a_{p},A_{p}),(\alpha+\lambda,1)}\Big|\frac{r}{c_j} \Big],
\end{equation}
and
\begin{equation}
\tilde{\mathcal{K}}(V_{p+1,q}^{\alpha,\beta};\textbf{c};r)=\sum_{j=1}^\infty\frac{(-1)^{j-1}c_j^{-\lambda}}{(c_j+r)^\mu}.V_{p+1,q}^{\alpha,\beta}\Big[_{\;\;\;(b_p,B_q)}^{(a_{p},A_{p}),(\alpha+\lambda,1)}\Big|\frac{r}{c_j} \Big].
\end{equation}

The Laplace integral form of a Dirichlet series was one of most powerful tools in getting closed integral form expressions for  the Mathieu--type series $\mathcal{K}$ and its alternating variant $\tilde{\mathcal{K}}.$ For $\textbf{c}$ satisfying (\ref{cccc}), we have
\begin{equation}
\mathcal{D}_{\textbf{c}}(x)=\sum_{n=1}^\infty e^{-c_n^\eta x}=x\int_0^\infty e^{-xt}A_\eta(t)dt,
\end{equation}
where the so--called counting function $A_\eta( t )$ has been found easily in the following manner
$$A_\eta(t)=\sum_{n:a_n^\eta\geq t}1=[c^{-1}(t^{\frac{1}{\eta}})],$$
where $c^{-1}(t)$ is the inverse function of $c(x),$ and $[a]$ is the integer part of a real number $a.$ From this, and using the fact that
$$[c^{-1}(t^{\frac{1}{\eta}})]\equiv 0,\;t\in[0,c_1^\eta),$$
we find that
\begin{equation}\label{iii}
\mathcal{D}_{\textbf{c}}(x)=x\int_{c_1^\eta}^\infty e^{-xt}[c^{-1}(t^{\frac{1}{\eta}})]dt.
\end{equation}
 A comprehensive overview of this technique is in \cite{TI1, TI3}. Note important results on inequalities for Mathieu series proved by V.P.Zastavnyi in \cite{Zas1,Zas2} and Z. Tomovski et al. in \cite{Z1,Z2}.

\begin{theorem}\label{T9}Let $\alpha,\beta>-1$ and $\lambda,\mu,r>0.$ Then for the Mathieu--type power series $\mathcal{K}(V_{p+1,q}^{\alpha,\beta};\textbf{c};r)$  the next integral representation is valid:
\begin{equation}
\mathcal{K}(V_{p+1,q}^{\alpha,\beta};\textbf{c};r)=K^V_{\bf c}(r,\lambda,\mu+1)+\mu K^V_{\bf c}(r,\lambda+1,\mu),
\end{equation}
where
\begin{equation}
K^V_{\bf c}(r,\lambda,\mu)=\int_{c_1}^\infty\frac{[c^{-1}(x)]}{x^\lambda(x+r)^\mu}V_{p+1,q}^{\alpha,\beta}\Big[_{\;\;\;\;(b_p,B_q)}^{(a_{p},A_{p}),(\alpha+\lambda,1)}\Big|\frac{r}{x} \Big]dx.
\end{equation}
\end{theorem}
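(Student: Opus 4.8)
The plan is to realise $\mathcal{K}(V_{p+1,q}^{\alpha,\beta};\textbf{c};r)$ as a Dirichlet--type series in the summation index $j$ and to move the dependence on $c_j$ into exponential kernels, so that the Laplace representation of a Dirichlet series recalled in \eqref{iii} becomes applicable; the two resulting inner integrals will then be undone with Theorem \ref{TT}.

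First I would rewrite the generic term. The scaling relation $V_{p+1,q}^{\alpha+\lambda,\beta}[\,\cdot\,|y]=y^{\lambda}V_{p+1,q}^{\alpha,\beta}[\,\cdot\,|y]$ contained in the statement of Theorem \ref{TT}, together with \eqref{RRR} applied with $x\mapsto c_j/r$, gives
\begin{equation*}
\frac{c_j^{-\lambda}}{(c_j+r)^\mu}\,V_{p+1,q}^{\alpha,\beta}\Big[_{\;\;\;\;(b_p,B_q)}^{(a_{p},A_{p}),(\alpha+\lambda,1)}\Big|\frac{r}{c_j}\Big]=\frac{r^{-\lambda}}{(c_j+r)^\mu}\int_0^\infty e^{-c_j t/r}\,t^{\lambda-1}\,V_{p,q}^{\alpha,\beta}\big[\,\cdot\,\big|t\big]\,dt .
\end{equation*}
Writing next the algebraic factor as $(c_j+r)^{-\mu}=\Gamma(\mu)^{-1}\int_0^\infty u^{\mu-1}e^{-(c_j+r)u}\,du$ (a case of \eqref{YYY}) and summing over $j$, one obtains after interchanging summation and integration
\begin{equation*}
\mathcal{K}=\frac{r^{-\lambda}}{\Gamma(\mu)}\int_0^\infty\!\!\int_0^\infty u^{\mu-1}\,t^{\lambda-1}\,e^{-ru}\,V_{p,q}^{\alpha,\beta}\big[\,\cdot\,\big|t\big]\,\Big(\sum_{j\ge1}e^{-c_j(u+t/r)}\Big)\,du\,dt ,
\end{equation*}
the interchange being legitimate because all summands and integrands are nonnegative (the kernel of $V_{p,q}^{\alpha,\beta}$ is a ratio of Gamma functions), so that Tonelli's theorem applies.

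Then I would insert the special case $\eta=1$ of \eqref{iii}, i.e. $\sum_{j\ge1}e^{-c_j\xi}=\xi\int_{c_1}^\infty e^{-\xi w}[c^{-1}(w)]\,dw$, with $\xi=u+t/r$, and interchange once more. Because of the extra factor $\xi=u+t/r$, the $u$--integration is elementary and breaks into two pieces,
\begin{equation*}
\int_0^\infty u^{\mu-1}\Big(u+\frac{t}{r}\Big)e^{-u(r+w)}\,du=\frac{\Gamma(\mu+1)}{(r+w)^{\mu+1}}+\frac{t}{r}\,\frac{\Gamma(\mu)}{(r+w)^{\mu}} ,
\end{equation*}
which is exactly what produces the two summands in the assertion, one of them carrying the coefficient $\mu$. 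In each of the two pieces the surviving integral in $t$ has the form $\int_0^\infty e^{-(w/r)t}\,t^{\lambda-1}\,V_{p,q}^{\alpha,\beta}[\,\cdot\,|t]\,dt$, respectively the same with $t^{\lambda}$ in place of $t^{\lambda-1}$; reading \eqref{RRR} backwards with parameter $\lambda$, respectively $\lambda+1$, identifies these with $(r/w)^{\lambda}$, respectively $(r/w)^{\lambda+1}$, times the order-$(p+1)$ generalized Volterra function that carries the additional numerator parameter $(\alpha+\lambda,1)$, respectively $(\alpha+\lambda+1,1)$, evaluated at $r/w$. Gathering the powers of $r$ and $w$ against $(r+w)^{-\mu-1}$ and $(r+w)^{-\mu}$ then collapses the two pieces into exactly $K^V_{\bf c}(r,\lambda,\mu+1)$ and $K^V_{\bf c}(r,\lambda+1,\mu)$, which completes the proof.

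The algebraic bookkeeping above is routine; the one genuinely delicate point --- and the hard part of turning this plan into a fully rigorous argument --- is the justification of the two interchanges of the summation with the improper integrals, since $V_{p,q}^{\alpha,\beta}[\,\cdot\,|t]$ converges only on a bounded $t$--interval. One way out is to note that the Laplace factor $e^{-(w/r)t}$ with $w\ge c_1>0$ overpowers that growth; a cleaner one is to keep the integral defining $V_{p,q}^{\alpha,\beta}$ unexpanded and to apply Tonelli's theorem once, to the single nonnegative integrand in the variables $(s,t,u)$ together with the discrete index $j$. One must also verify the convergence of $\mathcal{K}$ and of the integrals $K^V_{\bf c}$; this follows from $\lambda,\mu,r>0$ together with the at most polynomial growth of $[c^{-1}(\cdot)]$ that is forced by $\textbf{c}$ satisfying \eqref{cccc}.
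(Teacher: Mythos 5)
Your proposal is correct and follows essentially the same route as the paper's proof: write each term via the Laplace representation \eqref{RRR} of Theorem \ref{TT} (after the $\lambda$--shift scaling), express $(c_j+r)^{-\mu}$ as a Gamma integral via \eqref{YYY}, sum the exponentials with the Dirichlet--series formula \eqref{iii}, split the factor $s+t$ into two pieces, and read \eqref{RRR} backwards to identify the two integrals. The only point worth noting is that, carried out carefully, your computation (exactly like the paper's own) attaches the factor $\mu$ to the term with $(r+x)^{-\mu-1}$, i.e. yields $\mathcal{K}=K^V_{\bf c}(r,\lambda+1,\mu)+\mu K^V_{\bf c}(r,\lambda,\mu+1)$, so the placement of $\mu$ in the displayed statement of the theorem is a typo in the paper rather than a gap in your argument.
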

\begin{proof}Consider the Laplace transform formula (\ref{RRR}) of the function $x^{\lambda-1}V_{p,q}^{\alpha,\beta}[.]$  and in view of  (\ref{YYY}) and (\ref{iii}), we get
\begin{equation}
\begin{split}
\mathcal{K}(V_{p+1,q}^{\alpha,\beta};\textbf{c};r)&=\sum_{j=1}^\infty\frac{1}{(c_j+r)^\mu}\int_0^\infty e^{-c_js}s^{\lambda-1}V_{p,q}^{\alpha,\beta}\Big[_{(b_p,B_q)}^{(a_{p},A_{p})}\Big|rs \Big]dx\\
&=\frac{1}{\Gamma(\mu)}\sum_{j=1}^\infty\int_0^\infty\int_0^\infty e^{-c_js-(c_j+r)t}t^{\mu-1}s^{\lambda-1}V_{p,q}^{\alpha,\beta}\Big[_{(b_p,B_q)}^{(a_{p},A_{p})}\Big|rs \Big]dxdt\\
&=\frac{1}{\Gamma(\mu)}\int_0^\infty\int_0^\infty \left(\sum_{j=1}^\infty e^{-c_j(s+t)}\right)e^{-rt}t^{\mu-1}s^{\lambda-1}V_{p,q}^{\alpha,\beta}\Big[_{(b_p,B_q)}^{(a_{p},A_{p})}\Big|rs \Big]dxdt\\
&=\int_{c_1}^\infty \frac{[c^{-1}(x)]}{\Gamma(\mu)}\left(\int_0^\infty e^{-(r+x)t}t^{\mu-1}dt\right)\left(\int_0^\infty e^{-xs}s^\lambda V_{p,q}^{\alpha,\beta}\Big[_{(b_p,B_q)}^{(a_{p},A_{p})}\Big|rs \Big]ds\right)dx\\
&+\int_{c_1}^\infty \frac{[c^{-1}(x)]}{\Gamma(\mu)}\left(\int_0^\infty e^{-(r+x)t}t^{\mu}dt\right)\left(\int_0^\infty e^{-xs}s^{\lambda-1} V_{p,q}^{\alpha,\beta}\Big[_{(b_p,B_q)}^{(a_{p},A_{p})}\Big|rs \Big]ds\right)dx\\
&=\int_{c_1}^\infty \frac{[c^{-1}(x)]}{x^{\lambda+1}(r+x)^\mu}V_{p+1,q}^{\alpha,\beta}
\Big[_{(b_q,B_q)}^{(a_{p},A_{p}),(\alpha+\lambda+1,1)}
\Big|\frac{r}{x}\Big]dsdx
\\
&+\mu\int_{c_1}^\infty \frac{[c^{-1}(x)]}{x^{\lambda}(r+x)^{\mu+1}}V_{p+1,q}^{\alpha,\beta}
\Big[_{(b_q,B_q)}^{(a_{p},A_{p}),(\alpha+\lambda,1)}
\Big|\frac{r}{x}\Big]ds
dx,
\end{split}
\end{equation}
which proves the Theorem \ref{T9}.
\end{proof}

Obviously, by repeating the same calculations as above and using the formula (see \cite[Eq. (13)]{TI3})
\begin{equation}
\tilde{D}_{\textbf{c}}(y)=\sum_{j=1}^\infty(-1)^{j-1}e^{-c_j y}=y\int_{c_1}^\infty  e^{-yx}\sin^2\left(\frac{\pi}{2}\left[c^{-1}(x)\right]\right)dx,
\end{equation}
we can deduce the following result for the alternating Mathieu--type power series $\tilde{\mathcal{K}}.$

\begin{theorem}Let $\alpha,\beta>-1$ and $\lambda,\mu,r>0.$ Then for the alternating Mathieu--type power series $\tilde{\mathcal{K}}(V_{p+1,q}^{\alpha,\beta};\textbf{c};r)$  the next integral representation is valid:
\begin{equation}
\tilde{\mathcal{K}}(V_{p+1,q}^{\alpha,\beta};\textbf{c};r)=\tilde{K}^V_{\bf c}(r,\lambda,\mu+1)+\mu \tilde{K}^V_{\bf c}(r,\lambda+1,\mu),
\end{equation}
where
\begin{equation}
\tilde{K}^V_{\bf c}(r,\lambda,\mu)=\int_{c_1}^\infty\frac{\sin^2\left(\frac{\pi}{2}[c^{-1}(x)]\right)}{x^\lambda(x+r)^\mu}V_{p+1,q}^{\alpha,\beta}\Big[_{\;\;\;\;(b_p,B_q)}^{(a_{p},A_{p}),(\alpha+\lambda,1)}\Big|\frac{r}{x} \Big]dx.
\end{equation}
\end{theorem}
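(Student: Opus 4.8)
The plan is to mirror exactly the structure of the proof of Theorem \ref{T9}, substituting the alternating Dirichlet-series identity for the plain one. First I would start from the definition of $\tilde{\mathcal{K}}(V_{p+1,q}^{\alpha,\beta};\textbf{c};r)$ and invoke the Laplace-transform formula \eqref{RRR} to write each term $c_j^{-\lambda}V_{p+1,q}^{\alpha+\lambda,\beta}[\cdots|r/c_j]$ as $\int_0^\infty e^{-c_j s}s^{\lambda-1}V_{p,q}^{\alpha,\beta}[\cdots|rs]\,ds$. Then I would represent the factor $(c_j+r)^{-\mu}$ via \eqref{YYY} as $\frac{1}{\Gamma(\mu)}\int_0^\infty e^{-(c_j+r)t}t^{\mu-1}\,dt$, so that the whole sum becomes a double integral over $(s,t)$ of $\bigl(\sum_{j=1}^\infty (-1)^{j-1}e^{-c_j(s+t)}\bigr)e^{-rt}t^{\mu-1}s^{\lambda-1}V_{p,q}^{\alpha,\beta}[\cdots|rs]$.

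Next I would apply the alternating counting-function identity
\[
\tilde{D}_{\textbf{c}}(y)=\sum_{j=1}^\infty(-1)^{j-1}e^{-c_j y}=y\int_{c_1}^\infty e^{-yx}\sin^2\!\Bigl(\tfrac{\pi}{2}[c^{-1}(x)]\Bigr)\,dx
\]
with $y=s+t$, which produces a factor $(s+t)$ together with the weight $\sin^2\bigl(\tfrac{\pi}{2}[c^{-1}(x)]\bigr)$. Expanding $(s+t)=s+t$ splits the integral into two pieces, exactly as in Theorem \ref{T9}: one carrying an extra power of $s$ and one carrying an extra power of $t$. After reordering the integrations (Fubini, justified by absolute convergence of the alternating series and positivity of all the remaining integrands — the same justification as in the non-alternating case, which I would simply cite), the inner $s$-integrals $\int_0^\infty e^{-xs}s^{\lambda}V_{p,q}^{\alpha,\beta}[\cdots|rs]\,ds$ and $\int_0^\infty e^{-xs}s^{\lambda-1}V_{p,q}^{\alpha,\beta}[\cdots|rs]\,ds$ are again evaluated by \eqref{RRR}, yielding $x^{-(\lambda+1)}V_{p+1,q}^{\alpha+\lambda+1,\beta}[\cdots|r/x]$ and $x^{-\lambda}V_{p+1,q}^{\alpha+\lambda,\beta}[\cdots|r/x]$ respectively, and the $t$-integrals are evaluated by \eqref{YYY}, giving $\Gamma(\mu)(x+r)^{-\mu}$ and $\Gamma(\mu+1)(x+r)^{-(\mu+1)}$. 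Collecting terms produces precisely $\tilde{K}^V_{\bf c}(r,\lambda,\mu+1)+\mu\,\tilde{K}^V_{\bf c}(r,\lambda+1,\mu)$.

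The main obstacle is the interchange of the (now only conditionally convergent, before regrouping) alternating series with the two integrations. In the non-alternating Theorem \ref{T9} everything is non-negative and Tonelli applies directly; here I would instead note that $\sum_{j=1}^\infty(-1)^{j-1}e^{-c_j(s+t)}$ converges absolutely for every $s+t>0$ since $c_j\uparrow\infty$, that its partial sums are uniformly bounded by $e^{-c_1(s+t)}$, and that the remaining integrand $e^{-rt}t^{\mu-1}s^{\lambda-1}V_{p,q}^{\alpha,\beta}[\cdots|rs]$ is non-negative and integrable over $(0,\infty)^2$ (finiteness being exactly what makes $\tilde{\mathcal{K}}$ converge); dominated convergence then legitimizes pulling the sum inside. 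Apart from this point the computation is the verbatim analogue of Theorem \ref{T9}, so I would present it compactly rather than reproducing every line, and simply write: "by repeating the calculations in the proof of Theorem \ref{T9} with $\tilde D_{\textbf c}$ in place of $\mathcal D_{\textbf c}$, we obtain the claim."
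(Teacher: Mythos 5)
Your proposal is correct and coincides with the paper's treatment: the paper proves this result exactly by repeating the calculation of Theorem \ref{T9} with the alternating identity $\tilde{D}_{\textbf{c}}(y)=y\int_{c_1}^\infty e^{-yx}\sin^2\bigl(\tfrac{\pi}{2}[c^{-1}(x)]\bigr)dx$ in place of $\mathcal{D}_{\textbf{c}}$, which is precisely your plan. Your added justification of the sum--integral interchange (Leibniz bound on partial sums plus dominated convergence) is a point the paper leaves implicit, so it only strengthens the argument.
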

\vspace{0.5cm}

\noindent\textbf{Acknowledgment}

The authors like to thank Prof. Alexander Apelblat for providing us with the copy of his book on
Volterra functions, it was very useful for the preparation of this paper.

\end{document}